\journalname{}
\newcommand{\norv}{N}               
\newcommand{\svc}{\mu}        		
\newcommand{\pvc}{x}          		
\newcommand{\sv}{\bm{\svc}}    		
\newcommand{\pv}{\bm{\pvc}}    		
\newcommand{\pdf}{\rho}    	        
\newcommand{\pdfc}{\rho}            
\newcommand{\nstoch}{n_{\svc}}      
\newcommand{\nphy}{n_{\pvc}}        
\newcommand{\gpc}{\phi}             
\newcommand{\GPC}{\Phi}             
\newcommand{\ap}{{\scriptsize\mbox{ap}}}  %
\newcommand{\rd}{\mathrm{d}}        
\newcommand{\normp}[1]{\Vert#1\Vert_{L^2}}  
\newcommand{\tol}{\mathtt{TOL}}
\newcommand{\rTheta}{\mathrm{\Theta}}
\newcommand{\err}{\mathtt{ERR}}
\newcommand{\refs}{\mathtt{REF}}
\newcommand{\mean}[1]{\mathrm{E}\left[#1\right]}
\newcommand{\var}[1]{\mathrm{V}\left[#1\right]}
\newcommand{\anovat}{\mathbb{T}}
\newcommand{\anovaT}{\mathfrak{T}}
\newcommand{\anovas}{\mathbb{S}}
\newcommand{\anovau}{\mathbb{U}}
\newcommand{\anovam}{\mathbb{M}}
\newcommand{\anovap}{\mathbb{P}}
\def\dsR{\hbox{R}}
\def\dsN{\hbox{N}}
\newcommand{\guanjie}[1]{{\color{black}#1}}
\begin{document}

\title{An adaptive ANOVA stochastic Galerkin method for partial differential equations with \guanjie{high-dimensional} random inputs
}

\titlerunning{An adaptive ANOVA stochastic Galerkin method for PDEs with high-dimensional random inputs}        

\author{Guanjie Wang \and Smita Sahu \and Qifeng Liao
}


\institute{
Guanjie Wang  \at School of Statistics and Mathematics,
Shanghai Lixin University of Accounting and Finance, Shanghai, China.\\\ \email{guanjie@lixin.edu.cn}  \\\  
Smita Sahu \at School of Mathematics and Physics, University of Portsmouth, Lion Terrace, PO1~3HF, UK.\\\
\email{smita.sahu@port.ac.uk}\\\ 
Qifeng Liao \at School of Information Science and Technology,
ShanghaiTech University, Shanghai, China.\\\ \email{liaoqf@shanghaitech.edu.cn 
} 
}

\date{Received: date / Accepted: date}
\maketitle

\begin{abstract}
It is known that standard stochastic Galerkin methods  encounter challenges when solving partial differential equations with high-dimensional random inputs, which are typically caused by the large number of stochastic basis functions required. It becomes crucial to properly choose effective basis functions, such that the dimension of the stochastic approximation space can be reduced.  In this work, we focus on the stochastic Galerkin approximation associated with generalized polynomial chaos (gPC), and explore the gPC expansion based on the analysis of variance (ANOVA) decomposition.  A concise form of the gPC expansion is presented for each component function of the ANOVA expansion, and an adaptive ANOVA procedure is proposed to construct the overall stochastic Galerkin system. Numerical results demonstrate the efficiency of our proposed adaptive ANOVA stochastic Galerkin method \guanjie{for both  diffusion and Helmholtz problems}.

\keywords{Adaptive ANOVA \and stochastic Galerkin methods \and generalized polynomial chaos \and uncertainty quantification}
\subclass{35B30 \and 35R60 \and 65C30 \and 65D40}
\end{abstract}

\section{Introduction}

Over the past few decades, there has been a significant increase in efforts to develop efficient uncertainty quantification approaches for solving partial differential equations (PDEs) with random inputs. Typically, these random inputs arise from a lack of precise measurements or a limited understanding of realistic model parameters, such as permeability coefficients in diffusion problems and refraction coefficients in acoustic problems~\cite{Xiu2002modeling,Elman2005,Feng2015}.

Designing a surrogate model or calculating statistics (such as mean and variance of the solution) for partial differential equations (PDEs) with random inputs is of great interest, especially when the inputs are high-dimensional. To achieve this, extensive efforts have been made. The Monte Carlo method (MCM) and its variants are among the direct methods for computing the mean and variance~\cite{Caflisch1998,fishman2013}. In MCM, numerous sample points of the random inputs are generated based on their probability density functions. 
For each sample point, the corresponding deterministic problem can be solved using existing numerical methods.
The statistics of the stochastic solution can then be estimated by aggregating the results of these deterministic solutions. While MCM is easy to implement, it converges slowly and typically requires a large number of sample points. Additionally, it does not provide a surrogate model directly, which limits its applications. 

To enhance efficiency, the stochastic collocation method (SCM) and the stochastic Galerkin method (SGM) have been developed. Both the SCM and the SGM are typically more efficient than the MCM for solving partial differential equations (PDEs) with moderate dimensional random inputs~\cite{Xiu05,Xiu2002modeling,Xiu2002wiener,Xiu2003modeling,tang2010convergence,yan2019adaptive}. 
To further accelerate the SCM and the SGM, various of methods such as the reduced basis collocation method~\cite{Elmanliao},  the dynamically orthogonal approximation~\cite{Cheng2013a,Cheng2013b,musharbash2015error}, the reduced basis solver based on low-rank approximation~\cite{powell2015} and the preconditioned low-rank projection methods~\cite{LeeElman16,Lee2019low} are actively studied. However, these methods still face challenges in addressing high-dimensional problems, as the number of collocation points required by the SCM and the number of unknowns in the SGM increase rapidly with an increasing number of random variables, a well-established phenomenon referred to as the curse of dimensionality.

To address the challenges posed by high-dimensional problems, novel techniques have been developed and implemented. For example, the adaptive sparse grids~\cite{Agarwal2009domain}, multi-element generalized polynomial chaos~\cite{Wan2005adaptive}, the compressive sensing approaches~\cite{guo2020constructing,jakeman2017generalized}, and anchored ANOVA methods (i.e. cut-HDMR)~\cite{Ma2010adaptive,Yang2012adaptive}. In particular, the anchored ANOVA method has been extensively employed in various research studies (see for instance~\cite{Tang2015sensitivity,Gao2010anova,Sudret2008global,Tang2016,Liao2016reduced}). It is shown that the choice of the anchor point is crucial for efficient approximation~\cite{Sobol2003theorems,Wang2008approximation}. The work \cite{Tang2015sensitivity} proposes to use the covariance decomposition to effectively evaluate the output variance of multivariate functions. An efficient approximation strategy for high-dimensional periodic functions is proposed based on the fast Fourier transform and the ANOVA decomposition in \cite{Daniel2021}, and its application to PDEs with random coefficients is studied in~\cite{Fabian2022}. \guanjie{The studies conducted in~\cite{cho2018adaptive,williamson2021application} explore the adaptive reduced basis collocation method based on ANOVA decomposition and its applications to problems involving anisotropic random inputs and  stochastic Stokes-Brinkman equations.}

In this paper, we investigate the generalized polynomial chaos (gPC) expansion of component functions for the ANOVA decomposition,  and  present a concise form of the gPC expansion for each component function. With this formulation, we propose an adaptive ANOVA stochastic Galerkin method. The proposed method adaptively selects the effective gPC basis functions in the stochastic space, reducing the dimension of the stochastic approximation space significantly, and leveraging the orthonormality of the gPC basis to facilitate the computation of the variance of each term in the ANOVA decomposition.  \guanjie{Note that compared with anchored ANOVA collocation methods \cite{Ma2010adaptive,Yang2012adaptive,Liao2016reduced,cho2018adaptive}, our proposed adaptive ANOVA stochastic Galerkin method avoids the difficulty for selecting anchor points, which are crucial for anchored ANOVA methods \cite{Sobol2003theorems,Wang2008approximation,Gao2010anova}.}  Additionally, the proposed method provides a straightforward approach to build a surrogate model. We conduct numerical simulations and present the results to demonstrate the effectiveness and efficiency of our proposed method.

An outline of the paper is as follows. We present our problem setting in the next section. In Section~\ref{sec:SGANOVA}, we review the stochastic Galerkin method and the ANOVA decomposition for partial differential equations with random inputs. Our main theoretical results and the adaptive ANOVA stochastic Galerkin method are presented in Section~\ref{sec:Adaptive SGANOVA}. Numerical results are discussed in Section~\ref{sec:numericaltests}. Section~\ref{sec:concusion} concludes the paper. 

\section{Problem setting}
Let $D\subseteq {\dsR}^d~(d = 2, 3)$ denote a physical domain that is bounded, connected, with a polygonal boundary $\partial D$, and $\pv \in {\dsR}^d$ denote a physical variable.
Let $\sv = (\svc_1,\ldots,\svc_{\norv})$ be a random vector of dimension of $\norv$, where the image of $\svc_i$ is denoted by $\Gamma_i$, and the probability density function of $\svc_i$  is denoted by $\rho_{i}(\svc_i)$. We further assume that the components of $\sv$, i.e., $\svc_1,\ldots,\svc_{\norv}$ are mutually independent, then the image of $\sv$ is given by $\Gamma = \Gamma_1\times\cdots\times\Gamma_{\norv}$, and the probability density function of $\sv$ is given by $\pdf(\sv) = \prod_{i=1}^{\norv}\pdfc_{i}(\svc_i)$. In this work, we focus on the partial differential equations (PDEs) with random inputs, that is
\begin{equation}\label{eq:spde}
\begin{cases}
\mathfrak{L}(\pv,\sv,u(\pv,\sv)) = f(\pv)\ & \forall~ (\pv,\sv)\in D\times\Gamma,\\
\mathfrak{b}(\pv,\sv,u(\pv,\sv)) = q(\pv)\ &\forall~ (\pv,\sv)\in\partial D\times \Gamma,
\end{cases}
\end{equation}
where $\mathfrak{L}$ is a linear partial differential operator with respect to physical variables, and $\mathfrak{b}$ is a boundary operator. Both  operators can have random coefficients. The source function is denoted by $f(\pv)$, and $q(\pv)$ specifies the boundary conditions. Additionally, we assume that $\mathfrak{L}$ and $\mathfrak{b}$ are affinely dependent on the random inputs. Specifically, we have
\begin{eqnarray}
\mathfrak{L}(\pv,\sv,u(\pv,\sv)) = \sum_{i=1}^{K}\rTheta_{\mathfrak{L}}^{(i)}(\sv)\mathfrak{L}_i(\pv,u(\pv,\sv)), \label{eq:affine1}\\ 
\mathfrak{b}(\pv,\sv,u(\pv,\sv)) = \sum_{i=1}^{K}\rTheta_{\mathfrak{b}}^{(i)}(\sv)\mathfrak{b}_i(\pv,u(\pv,\sv)),\label{eq:affine2}
\end{eqnarray} 
where $\{\mathfrak{L}_{i}\}_{i=1}^{K}$ are parameter-independent  linear differential operators, and $\{\mathfrak{b}_{i}\}_{i=1}^{K}$ are parameter-independent  boundary operators. Both $\rTheta_{\mathfrak{L}}^{(i)}(\sv)$ and $\rTheta_{\mathfrak{b}}^{(i)}(\sv)$ take values in $\dsR$ for $i=1,\ldots,K$.

It is of interest to design a surrogate model for the problem \eqref{eq:spde} or calculate statistics of the stochastic solution $u(\pv,\sv)$, such as the mean and the variance. 
\section{Stochastic Galerkin method and ANOVA decomposition}\label{sec:SGANOVA}
In this section, we introduce the stochastic Galerkin methods for solving problem~\eqref{eq:spde}, and  we review the ANOVA decomposition for multi-variable functions. For the sake of presentation simplicity, we consider problems that satisfy homogeneous Dirichlet boundary conditions. However, it is noteworthy that the approach we present can be readily extended to \guanjie{other arbitrary (well-posed) boundary conditions.}
\subsection{Variational formulation}
To introduce the variational form of \eqref{eq:spde}, some notations are required. We first define the Hilbert spaces $L^2(D)$ and $L^2_{\pdf}(\Gamma)$ via
\begin{eqnarray*}
	L^2(D)&:=&\left\{v(\pv): D \to \dsR\ \bigg|\ \int_D v^2(\pv)\, \rd\pv <\infty \right\}, \\
	L_{\pdf}^2(\Gamma)&:=&\left\{g(\sv): \Gamma \to \dsR\ \bigg|\ \int_{\Gamma}\pdf(\sv) g^2(\sv)\, \rd\sv <\infty \right\},
\end{eqnarray*} 
which are equipped with the inner products
\begin{eqnarray*}\label{eq:inner1}
\langle v(\pv),\hat{v}(\pv)\rangle_{L^2}&:=&\int_D v(\pv)\hat{v}(\pv)\,\rd\pv, \\
\langle g(\sv),\hat{g}(\sv)\rangle_{L^2_\pdf}&:=&\int_{\Gamma}\pdf(\sv) g(\sv)\hat{g}(\sv)\,\rd\sv.\label{eq:inner2}
\end{eqnarray*}
Following presentation from Babu\v ska et al.\cite{Babuska2004}, we define the tensor space of $L^2(D)$ and $L^2_{\pdf}(\Gamma)$ as
\begin{equation}\notag
L^2(D)\otimes L^2_{\pdf}(\Gamma):=\left\{ w(\pv,\sv) \bigg| w(\pv,\sv)=\sum_{i=1}^{n}v_i(\pv)g_i(\sv), v_i(\pv)\in L^2(D),\, g_i(\pv)\in L^2_{\pdf}(\Gamma), n\in \dsN \right\},
\end{equation}
which is equipped with the inner product
\begin{equation}\notag
\left\langle w(\pv,\sv),\hat{w}(\pv,\sv)\right\rangle_{L^2\otimes L^2_{\pdf}} =\int_\Gamma\int_{D} w(\pv,\sv)\hat{w}(\pv,\sv)\pdf(\sv) \rd\pv \rd\sv.
\end{equation}
We next define the space
\begin{equation}\notag
H_0^1(D):=\left\{v\in H^1(D)\,| \,  v=0 \textrm{ on } \partial D\right\},
\end{equation}
where $H^1(D)$ is the  Sobolev  space 
\begin{equation}\notag
H^1(D):=\left\{v\in L^2(D)\,, \,   \partial v/ \partial x_i\in L^2(D),  i=1,\ldots,d\right\}.
\end{equation}
\guanjie{Furthermore,  we define the solution and test function space
	\begin{equation}\notag
		W:= H_0^1(D) \otimes L_{\pdf}^2(\Gamma)=\left\{w(\pv ,\sv )\in H_0^1(D)\otimes L^2_{\pdf}(\Gamma) \bigg|
		\Vert w(\pv ,\sv )\Vert_{L^2\otimes L^2_{\pdf}} <\infty \right\}, 
	\end{equation}
	where $\Vert\cdot\Vert_{L^2\otimes L^2_{\pdf}}$ is the norm induced by the inner product $\langle\ \cdot\ ,\ \cdot\ \rangle_{L^2\otimes L^2_{\pdf}}$.}
The variational form of \eqref{eq:spde} can be written as: find $u$ in $W = H_0^1(D)\otimes L^2_{\pdf}(\Gamma )$ such that 
\begin{equation}\label{eq:weak}
\mathfrak{B}(u,w) = \mathfrak{F}(w),\ \forall~ w\in W,
\end{equation}
where 
\begin{equation}\notag
\mathfrak{B}(u,w):=\left\langle \mathfrak{L}(\pv,\sv,u(\pv,\sv)),w(\pv,\sv)\right\rangle_{L^2\otimes L^2_\pdf}, 
\ \mathfrak{F}(w):= \left\langle f(\pv),w(\pv, \sv) \right\rangle_{L^2\otimes L^2_\pdf}.
\end{equation}
Since $\mathfrak{L}$ is affinely  dependent on the parameter $\sv\in\Gamma$ (see \eqref{eq:affine1}) then $\mathfrak{B}$ has the following form
\begin{equation}\label{eq:affine3}
\mathfrak{B}(u,w)= \sum_{i=1}^{K}\mathfrak{B}_i(u,w),
\end{equation}
where the component bilinear forms $\mathfrak{B}_i(\cdot, \cdot)$ for $i\in \dsN^+$ are defined as
\begin{equation}\label{eq:subweak}
\mathfrak{B}_i(u,w):=\left\langle\rTheta_{\mathfrak{L}}^{(i)}(\sv)\mathfrak{L}_i(\pv,u(\pv,\sv)), w(\pv,\sv) \right\rangle_{L^2\otimes L^2_\pdf}.
\end{equation}
\subsection{Discretization}\label{subsec:weakform}
A discrete version of (\ref{eq:weak}) is obtained by introducing a finite dimensional subspace to approximate~$W$. 
Specifically, we first denote the finite dimensional subspaces 
of the corresponding stochastic and physical spaces by 
\begin{equation}\notag
\begin{split}
S_p= {\rm span}\left\{\Phi_j(\sv)\right\}_{j=1}^{\nstoch}\subseteq L^2_{\pdf}(\Gamma)\},\
V_h = {\rm span}\left\{v_s(\pv)\right\}_{s=1}^{\nphy}\subseteq H_0^1(D)\},
\end{split}
\end{equation}
where $\Phi_j(\sv)$ and $v_s(\pv)$ refer to basis functions. We next define a finite dimensional subspace of the overall solution (and test) function space $W$ by
\begin{equation}\notag
W_h^p:= V_h \otimes S_p :={\rm span} \left\{v(\pv)\Phi(\sv)\left| v\in V_h, \Phi\in S_p \right.\right\}.
\end{equation}
The stochastic Galerkin method seeks an approximation $u^{\ap}(\pv,\sv)\in W_h^p$ such that 
\begin{equation}\label{eq:weak2}
\mathfrak{B}(u^{\ap},w) = \mathfrak{F}(w),\ \forall~ w\in W_h^p.\notag
\end{equation}
Suppose $u^{\ap}(\pv,\sv)$ is defined as
\begin{equation}\label{eq:appsg}
u^{\ap}(\pv,\sv):= \sum_{s=1}^{\nphy}\sum_{j=1}^{\nstoch}u_{sj}\Phi_j(\sv)v_s(\pv).
\end{equation}
Since $\mathfrak{L}$ is affinely dependent on the random inputs (see \eqref{eq:affine1}), we substitute \eqref{eq:appsg} into \eqref{eq:subweak} to obtain
\begin{equation}\label{eq:linerop}
\mathfrak{B}_i(u^{\ap}(\pv,\sv),w) = \sum_{s=1}^{\nphy}\sum_{j=1}^{\nstoch}u_{sj}\Phi(\sv)\left\langle\rTheta_{\mathfrak{L}}^{(i)}(\sv)\mathfrak{L}_iv_s(\pv), w(\pv,\sv)\right\rangle_{L^2\otimes L^2_\pdf}.
\end{equation}
Combining \eqref{eq:linerop}  with \eqref{eq:weak}--\eqref{eq:affine3}, we obtain a linear system for the unknown coefficients $u_{sj}$: 
\begin{equation}\label{eq:linsg}
\left(\sum_{i=1}^{K}\bm{G}_i\otimes \bm{A}_i\right)\bar{\bm{u}} = \bm{h}\otimes\bm{f}, 
\end{equation}
where $\{\bm{G}_i\}_{i=1}^K$ are matrices of size $\nstoch\times \nstoch$, and $\bm{h}$ is a column vector of length $\nstoch$. They are defined via  
\begin{equation}\label{eq:stoch_mat}
\bm{G}_i(j,k) = \langle \rTheta_{\mathfrak{L}}^{(i)}(\sv)\Phi_j(\sv),\Phi_k(\sv) \rangle_{L^2_\pdf}, \ \bm{h}(i) = \langle\Phi_i(\sv),1 \rangle_{L^2_\pdf}.
\end{equation}
The matrices $\bm{A}_i$ and the vector $\bm{f}$ in \eqref{eq:linsg} are defined through
\begin{equation}\label{eq:phy_mat}
\bm{A}_i(s,t) = \left\langle\mathfrak{L}_iv_s,v_t \right\rangle_{L^2},\ \bm{f}(s) = \left\langle f,v_s \right\rangle_{L^2}, 
\end{equation}
where $s = 1,\ldots\nphy$ and $t = 1,\ldots, \nphy$. The vector $\bar{\bm{u}}$ in \eqref{eq:linsg} is a column vector of length~$\nphy\times\nstoch$, and is defined by
\begin{equation}\notag
\bar{\bm{u}}=\begin{bmatrix}
\bm{u}_1 \\
\vdots\\
\bm{u}_{\nstoch}
\end{bmatrix}, \ \mbox{where}\ 
\bm{u}_j=\begin{bmatrix}
{u}_{1j} \\
\vdots\\
{u}_{\nphy j}
\end{bmatrix}, \ j= 1,\ldots, \nstoch.
\end{equation}

\subsection{ANOVA decomposition}
We define some notations before introducing the ANOVA decomposition. Let $\anovat = \{t_1,\ldots,t_{|\anovat|}\}$ be a subset of $\anovau=\{1,\ldots,\norv\}$, where $|\anovat|$ is the cardinality of $\anovat$. For special case where $\anovat = \emptyset$, we set~$|\anovat|$ to $0$. Otherwise, we assume that $t_1< t_2<\ldots<t_{|\anovat|}$. In addition, for $\anovat\neq \emptyset$, let $\sv_{\anovat}$ denote the $|\anovat|$-vector that contains the components of the vector $\sv$ indexed by~$\anovat$, i.e., $\sv_{\anovat}=(\svc_{t_1},\ldots,\svc_{t_{|\anovat|}})$. \guanjie{Furthermore, let $\pdf_{\anovat}(\sv_{\anovat})$ and $\Gamma_{\anovat}$ denote the probability density function and the image corresponding to $\sv_{\anovat}$ respectively, i.e.,
\begin{equation}\notag
\pdf_{\anovat}(\sv_{\anovat}) = \pdf_{t_1}(\svc_{t_1})\cdots\pdf_{t_{|\anovat|}}(\svc_{t_{|\anovat|}}),\ \Gamma_{\anovat}=\Gamma_{t_1}\times\cdots\times\Gamma_{t_{|\anovat|}}, \ |\anovat|>0.
\end{equation}
For a given cardinality $k=0,1,\ldots,\norv$, we define 
\begin{equation}\notag
	\anovaT_{k} := \{\anovat|{\anovat\subseteq \anovau, |\anovat|= k}\},\
	\anovaT_{k}^* :=\cup_{i=1,\ldots,k}\anovaT_i.
\end{equation}
The representation of $u(\pv,\sv)$ in a form
\begin{equation}\label{eq:anova1}
\begin{split}
u(\pv,\sv) &=  u_0(\pv) + \sum_{\anovat\in \anovaT_{\norv}^*}u_{\anovat}(\pv,\sv_{\anovat})\\
&= u_0(\pv) + \sum_{\anovat\in \anovaT_1}u_{\anovat}(\pv,\sv_{\anovat})+\ldots+\sum_{\anovat\in \anovaT_{\norv}}u_{\anovat}(\pv,\sv_{\anovat}),
\end{split}
\end{equation}
is a called an ANOVA decomposition if
\begin{eqnarray}
&&	u_0(\pv) = \int_{\Gamma} \pdf(\sv) u(\pv,\sv)\rd \sv, \label{eq:anova_prop1} \\
&&	\int_{\Gamma_{t_k}} \pdf_{t_k}(\svc_{t_k}) u_{\anovat}(\pv,\sv_{\anovat})\rd\svc_{t_k} = 0, \ t_k \in \anovat, \ |\anovat|>0.\label{eq:anova_prop2}
\end{eqnarray}
We call $u_{\anovat}(\pv,\sv_{\anovat})$ in \eqref{eq:anova1} the $|\anovat|$-th order term or $|\anovat|$-th order component function, and call~$u_0(\pv)$ the $0$-th order term or  $0$-th order component function for special case.

In this work, we assume that the components of the random vector $\sv$ are independent. It follows from~\eqref{eq:anova_prop2} that the terms in \eqref{eq:anova1}  can be expressed as integrals of $u(\pv,\sv)$. To illustrate this, we first show that if $\anovam\nsubseteq \anovat$, then 
\begin{equation}\label{eq:anova_termscompte}
\int_{\Gamma_{\anovat^c}}\pdf_{\anovat^c}(\sv_{\anovat^c})u_{\anovam}(\pv,\sv_{\anovam})\rd \sv_{\anovat^c} = 0,
\end{equation}
where $\anovat^c$ represents the complementary set of $\anovat$, i.e., $\anovat^c=\anovau\backslash\anovat$, and the universal set is given by $\anovau=\{1,\ldots,\norv\}$. In the rest of this paragraph, we prove \eqref{eq:anova_termscompte}. Since  $\anovam\nsubseteq \anovat$, there exists an element  $m_k\in \anovam$ such that $m_k\notin \anovat$; or in other words,  there exists an element  $m_k\in \anovam$ such that $m_k\in \anovat^c$. 
Letting $\anovap = \anovat^c\backslash  \{m_k\}$,  we then have
\begin{equation}\notag
\begin{split}
\int_{\Gamma_{\anovat^c}}\pdf_{\anovat^c}(\sv_{\anovat^c})u_{\anovam}(\pv,\sv_{\anovam})\rd \sv_{\anovat^c} & = \int_{\Gamma_{\anovap}}\int_{\Gamma_{m_k}} \pdf_{m_k}(\svc_{m_k})\pdf_{\anovap}(\sv_{\anovap})u_{\anovam}(\pv,\sv_{\anovam})\rd\svc_{m_k}\rd \sv_{\anovap}\\
&= \int_{\Gamma_{\anovap}} \left(\pdf_{\anovap}(\sv_{\anovap})\int_{\Gamma_{m_k}} \pdf_{m_k}(\svc_{m_k})u_{\anovam}(\pv,\sv_{\anovam})\rd\svc_{m_k}\right)\rd \sv_{\anovap}.
\end{split}
\end{equation}
According to \eqref{eq:anova_prop2}, we have 
\begin{equation}\notag
\int_{\Gamma_{m_k}}\pdf_{m_k}(\svc_{m_k}) u(\pv,\sv_{\anovam})\rd\svc_{m_k} = 0,
\end{equation}
which gives \eqref{eq:anova_termscompte}.

If $\anovam \subseteq \anovat$, then $\anovam\cap \anovat^c = \emptyset$, and we have
\begin{equation}\label{eq:anova_termscompte2}
\int_{\Gamma_{\anovat^c}}\pdf_{\anovat^c}(\sv_{\anovat^c})u_{\anovam}(\pv,\sv_{\anovam})\rd \sv_{\anovat^c} = u_{\anovam}(\pv,\sv_{\anovam})\int_{\Gamma_{\anovat^c}}\pdf_{\anovat^c}(\sv_{\anovat^c})\rd \sv_{\anovat^c}
= u_{\anovam}(\pv,\sv_{\anovam}).
\end{equation}
By using \eqref{eq:anova_termscompte} and \eqref{eq:anova_termscompte2}, we obtain 
\begin{equation}\notag
\int_{\Gamma_{\anovat^c}}\pdf_{\anovat^c}(\sv_{\anovat^c})u(\pv,\sv)\rd \sv_{\anovat^c} = u_{\anovat}(\pv,\sv_{\anovat}) + \sum_{{\anovam}\subset\anovat}u_{\anovam}(\pv,\sv_{\anovam}).
\end{equation}
This formula provides a means to compute the ANOVA terms, as described~\cite{Ma2010adaptive,Yang2012adaptive}:
\begin{equation}\notag
u_{\anovat}(\pv,\sv_{\anovat}) = \int_{\Gamma_{\anovat^c}}\pdf_{\anovat^c}(\sv_{\anovat^c})u(\pv,\sv)\rd \sv_{\anovat^c} - \sum_{{\anovam}\subset\anovat}u_{\anovam}(\pv,\sv_{\anovam}).
\end{equation}

An important property of the ANOVA decomposition is that all the terms in \eqref{eq:anova1} are orthogonal, as follows from \eqref{eq:anova_prop2}. 	To illustrate this, let us assume that $\anovat\neq\anovam$, which implies the existence of an element $t_k\in\anovat$ such that $t_k\notin\anovam$ (if this is not the case, then there exist an element  $m_k\in\anovam$ such that $m_k\notin\anovat$, and the proof follows a similar line of reasoning). let $\anovas= \anovau\backslash\{t_k\}$ be the complementary set of $\{t_k\}$, then we have
\begin{equation}\notag
\begin{split}
\int_{\Gamma}\pdf(\sv)u(\pv,\sv_{\anovat})u(\pv,\sv_{\anovam})\rd \sv &= \int_{\Gamma_{\anovas}}\int_{\Gamma_{t_k}}\pdf_{t_k}(\svc_{t_k})\pdf_{\anovas}(\sv_{\anovas})u(\pv,\sv_{\anovat})u(\pv,\sv_{\anovam})\rd \svc_{t_k}\rd\sv_{\anovas}\\
&=\int_{\Gamma_{\anovas}}\left(\pdf_{\anovas}(\sv_{\anovas})u(\pv,\sv_{\anovam})\int_{\Gamma_{t_k}}\pdf_{t_k}(\svc_{t_k}) u(\pv,\sv_{\anovat})\rd\svc_{t_k}\right)\rd \sv_{\anovas}.
\end{split}
\end{equation}
According to \eqref{eq:anova_prop2}, we have 
\begin{equation}\notag
\int_{\Gamma_{t_k}}\pdf_{t_k}(\svc_{t_k}) u(\pv,\sv_{\anovat})\rd\svc_{t_k} = 0,
\end{equation}
which implies
\begin{equation}\notag
\int_{\Gamma}\pdf(\sv)u(\pv,\sv_{\anovat})u(\pv,\sv_{\anovam})\rd \sv = 0.
\end{equation}

Due to the orthogonality of ANOVA terms, the variance of $u(\pv,\sv)$ is the summation of the variances of all the decomposition terms:}
\begin{equation}\label{eq:anoavvar1}
\var{u} = \sum_{k=1}^{\norv}\sum_{\anovat\in\anovaT_k}\var{u_{\anovat}(\pv,\sv_{\anovat})},
\end{equation}
where
\begin{equation}\label{eq:anovavar2}
\var{u_{\anovat}(\pv,\sv_{\anovat})} = \int_{\Gamma}\pdf(\sv) u_{\anovat}^2(\pv,\sv_{\anovat})\rd\sv=\int_{\Gamma_{\anovat}} \pdf_{\anovat}(\sv_{\anovat}) u_{\anovat}^2(\pv,\sv_{\anovat})\rd\sv_{\anovat}.
\end{equation}

\subsection{Adaptive ANOVA decomposition}
Note that the $k$-th order term in \eqref{eq:anova1}, i.e., $\sum_{\anovat\in \anovaT_{k}}u_{\anovat}(\pv,\sv_{\anovat})$,  has $\binom{\norv}{k}$ terms. For high-dimensional problems, the total number of terms in \eqref{eq:anova1} can be prohibitively large. This motivates the development of an adaptive ANOVA expansion for such problems.  The adaptive ANOVA approach is expected to be a more efficient way to approximate the exact solution since only part of low order terms in \eqref{eq:anova1} is activated based on certain criteria \cite{Yang2012adaptive}.

To determine which terms to include in the ANOVA decomposition, we define sensitivity indices for each term as follows:
\begin{equation}\notag
\mathcal{S}_{\anovat} = \frac{\normp{\var{u_{\anovat}}}}{\sum_{\anovat\in\anovaT_{\norv}^*}\normp{\var{u_{\anovat}}}},
\end{equation}
where $\normp{\cdot}$ denotes the $L^2$ function norm. It follows from equations \eqref{eq:anoavvar1}--\eqref{eq:anovavar2} that
\begin{equation}\notag
0\leq\mathcal{S}_{\anovat}\leq 1,\ \sum_{{\anovat}\in\anovaT_{\norv}^*}\mathcal{S}_{\anovat} = 1.
\end{equation}

The  intuitive way to select the important terms in the ANOVA decomposition \eqref{eq:anova1} is that find the terms such that 
$\mathcal{S}_{\anovat} \geq \tol,$
where $\tol$ is a given tolerance. However, computing all possible terms is computationally expensive since $u(\pv,\sv)$ is the solution of a PDE with random inputs. Instead, we construct the higher order component functions based on the lower order terms in the following way. 

Let $\mathfrak{J}_k\subseteq\anovaT_k$ denote the sets of active indices for each order. Using these active indices, the solution $u(\pv,\sv)$ can be approximated by
\begin{equation}\notag
u(\pv,\sv)\approx 
u_0(\pv) + \sum_{\anovat\in \mathfrak{J}_1}u_{\anovat}(\pv,\sv_{\anovat})+\sum_{\anovat\in \mathfrak{J}_{2}}u_{\anovat}(\pv,\sv_{\anovat}) +\ldots.
\end{equation}
For the first order terms, all the terms are retained, i.e., $\mathfrak{J}_1=\anovaT_1$. Suppose that 
$\mathfrak{J}_k$ is given for  $k\leq \norv - 1$, and define the relative variance $\gamma_{\anovat}$ as
\begin{equation}\label{eq:relativevariance}
\gamma_{\anovat}:= \frac{\normp{\var{u_{\anovat}}}}{\sum_{\anovat\in\mathfrak{J}_k^*}\normp{\var{u_{\anovat}}}}, \ \anovat\in\mathfrak{J}_k^*,
\end{equation}
where $\mathfrak{J}_k^*:= \mathfrak{J}_1\cup\cdots\cup\mathfrak{J}_k$.
Then, the index set of the next order can be  constructed via
\begin{equation}\notag
\mathfrak{J}_{k+1}: = \{\anovat|\anovat\in\anovaT_{k+1}, \mbox{ and } \forall~\anovas\subseteq\anovat \mbox{ with } |\anovas|=k \mbox{ satisfies } \anovas\in\tilde{\mathfrak{J}}_k\}, 
\end{equation}
where 
\begin{equation}\notag
\tilde{\mathfrak{J}}_k:=\{\anovat\in\mathfrak{J}_k|\gamma_{\anovat}\geq\tol\}.
\end{equation}

\begin{algorithm}[ht]
	\caption{Adaptive ANOVA decomposition~\cite{Yang2012adaptive}}\label{alg:adaptive_anova}
	\begin{algorithmic}
		{\normalsize
			\STATE {\bfseries Input:} $u(\pv,\sv)$ and $\tol$, set $k=1$ and $\mathfrak{J}_1 = \{\{1\},\ldots,\{\norv\}\}.$
			\WHILE{$(k<N)$ and $\mathfrak{J}_k\neq \emptyset$}
			\STATE Compute $\gamma_{\anovat}$ for $\anovat\in \mathfrak{J}_k$.
			\STATE Set $\tilde{\mathfrak{J}}_k:=\{\anovat\in\mathfrak{J}_k|\gamma_{\anovat}\geq\tol\}.$
			\STATE Set $\mathfrak{J}_{k+1}: = \{\anovat|\anovat\in\anovaT_{k+1}, \mbox{ and } \forall~\anovas\subseteq\anovat \mbox{ with } |\anovas|=k \mbox{ satisfies } \anovas\in\tilde{\mathfrak{J}}_k\}.$
			\STATE $k=k+1$.
			\ENDWHILE
		}
	\end{algorithmic}
\end{algorithm}
Algorithm~\ref{alg:adaptive_anova} presents the pseudo-code for the adaptive ANOVA decomposition. However, computing the relative variance $\gamma_{\anovat}$ efficiently by classical methods can be challenging, especially when dealing with the high-dimensional random inputs that arise in the context of PDEs with random inputs. To address this issue, we propose an adaptive ANOVA stochastic Galerkin method. In the following sections, we provide details of how to compute the relative variance $\gamma_{\anovat}$ in the adaptive ANOVA stochastic Galerkin method.
\section{Adaptive ANOVA stochastic Galerkin method}\label{sec:Adaptive SGANOVA}
In the last section, we introduced the ANOVA decomposition as a method to capture the important features of the solution.  By representing the component functions of the ANOVA decomposition as the generalized polynomial chaos expansion, an effective surrogate model for the problem~\eqref{eq:spde} can be constructed, which is essential for accelerating the solution evaluation process for time intensive problems. There are two widely used approaches for this purpose: the generalized polynomial chaos (gPC) expansion~\cite{Xiu2002modeling}, and the polynomial dimensional decomposition (PDD) \cite{Tang2016}. In this work, we apply the gPC expansion to present the component functions in \eqref{eq:anova1}. However, it is noteworthy that the PDD can also be used  similarly.

\subsection{Generalized polynomial chaos expansion of component functions}
Let us commence with the definition of gPC basis functions for a single random variable.
Suppose that $\svc_k$ is a random variable with probability density function $\pdfc_k(\svc_k)$, where  $k=1,\ldots,N$. The gPC basis functions are the orthogonal polynomials satisfying
\begin{equation}\label{eq:gpcbasis1}
\int_{\Gamma_{t_k}} \pdfc_k(\svc_k)\gpc_{i}^{(k)}(\svc_k)\gpc_{j}^{(k)}(\svc_k)\rd\svc_k = \delta_{i,j},
\end{equation}
where $i$ and $j$ are non-negative integers, and $\delta_{i,j}$ is the Kronecker delta. 

For $\norv$ dimensional random variables, let $\bm{i} = (i_1,\ldots,i_{\norv})\in\dsN^{\norv}$ be a multi-index with the total degree $|\bm{i}|=i_1+\cdots+i_{\norv}$.  Note that in this work, we assume that the components of $\sv$, i.e., $\svc_1,\ldots,\svc_{\norv}$ are mutually independent, and thus the $\norv$-variate gPC basis functions are the products of the univariate gPC polynomials, i.e.,
\begin{equation}\notag
\GPC_{\bm{i}}(\sv) = \gpc_{i_1}^{(1)}(\svc_1)\cdots\gpc^{(N)}_{i_{\norv}}(\svc_{\norv}).
\end{equation}
It follows from \eqref{eq:gpcbasis1} that 
\begin{equation}\notag
\int_{\Gamma} \pdf(\sv)\GPC_{\bm{i}}(\sv)\GPC_{\bm{j}}(\sv)\rd\sv = \delta_{\bm{i},\bm{j}},
\end{equation}
where $\delta_{\bm{i},\bm{j}} = \delta_{i_1,j_1}\cdots\delta_{i_{\norv},j_{\norv}}$.

We now consider the generalized polynomial chaos expansion of the $|\anovat|$-th order component function $u_{\anovat}(\pv,\sv_{\anovat})$, which requires some notations. Let $\anovat^c=\{t^{c}_1,\ldots,t^{c}_{|\anovat^c|}\}$ be the complementary set of $\anovat$, i.e., \guanjie{$\anovat^c=\anovau\backslash\anovat$}, where the universal set is given by $\anovau=\{1,\ldots,\norv\}$. For any set $\anovat\subseteq \anovau$ with $|\anovat|>0$, the gPC basis function corresponding to the multi-index $\bm{i}_{\anovat}$ is given by  
\begin{equation}\notag
\GPC_{\bm{i}_{\anovat}}(\sv_{\anovat}) = \gpc_{\bm{i}(t_1)}^{(t_1)}(\svc_{t_1})\cdots\gpc_{\bm{i}(t_{|\anovat|})}^{(t_{|\anovat|})}(\svc_{t_{|\anovat|}}), 
\end{equation}
where $\bm{i}_{\anovat}$ denote the multi-index that contains the components of the  multi-index $\bm{i}$ indexed by~$\anovat$, i.e., $\bm{i}_{\anovat} = (\bm{i}({t_1}),\ldots,\bm{i}(t_{|\anovat_k|}) )$. 
In additional, let $\mathfrak{M}_{\anovat}$ be the set of multi-indices defined by
\begin{equation}\notag
\mathfrak{M}_{\anovat}:= \{\bm{i}|\bm{i}\in\dsN^N,\bm{i}(t_1)\neq0,\ldots,\bm{i}(t_{|\anovat|})\neq 0,\bm{i}(t^c_1)= 0,\ldots, \bm{i}(t^c_{|\anovat^c|})= 0\}, \ 0<|\anovat|<N.
\end{equation}
For the special case $|\anovat|= 0$, we set $\mathfrak{M}_{\anovat}=\{\bm{i}|\bm{i}\in \dsN^{\norv}, \bm{i}(1)=0,\ldots,\bm{i}(N)= 0\}$, and for the special case $|\anovat|= N$, we set $\mathfrak{M}_{\anovat}:= \{\bm{i}|\bm{i}\in\dsN^N,\bm{i}(1)\neq0,\ldots,\bm{i}(N)\neq 0\}$. We can then state the following theorem:
\begin{theorem}
Given $\pv\in D$ and $\anovat\subseteq \anovau$ with $|\anovat|>0$, assuming that the  $|\anovat|$-th order component function $u_{\anovat}(\pv,\sv_{\anovat})$ belongs to $L^2_{\pdf}(\Gamma)$, then the generalized polynomial chaos expansion of $u_{\anovat}(\pv,\sv_{\anovat})$ can be expressed by 
\begin{equation}\label{eq:gpcexp1}
u_{\anovat}(\pv,\sv_{\anovat}) = \sum_{\bm{i}\in \mathfrak{M}_{\anovat}}u_{\bm{i}}(\pv)\GPC_{\bm{i}}(\sv),
\end{equation}
where  $u_{\bm{i}}(\pv)$ is the coefficient of $\GPC_{\bm{i}}(\sv)$ defined by 
\begin{equation}\notag
\begin{split}
u_{\bm{i}}(\pv)	&= \int_{\Gamma} \pdf(\sv) u_{\anovat}(\pv,\sv_{\anovat})\GPC_{\bm{i}}(\sv)\rd \sv.
\end{split} 
\end{equation}
\end{theorem}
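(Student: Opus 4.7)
The plan is to start from the fact that, for each fixed $\pv\in D$, the component function $u_{\anovat}(\pv,\cdot)$ lies in $L^2_{\pdf}(\Gamma)$, so it admits the full multivariate gPC expansion
\begin{equation*}
u_{\anovat}(\pv,\sv_{\anovat}) \;=\; \sum_{\bm{i}\in\mathbb{N}^N} u_{\bm{i}}(\pv)\,\GPC_{\bm{i}}(\sv),\qquad u_{\bm{i}}(\pv)=\int_\Gamma \pdf(\sv)\,u_{\anovat}(\pv,\sv_{\anovat})\,\GPC_{\bm{i}}(\sv)\,\rd\sv,
\end{equation*}
since $\{\GPC_{\bm{i}}\}$ is a complete orthonormal basis of $L^2_{\pdf}(\Gamma)$. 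The goal then reduces to proving that $u_{\bm{i}}(\pv)=0$ whenever $\bm{i}\notin\mathfrak{M}_{\anovat}$, i.e.\ whenever the multi-index $\bm{i}$ either has a nonzero entry on the complement $\anovat^c$ or has a zero entry somewhere on $\anovat$.

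I would split the argument into these two cases and use Fubini together with the product structure $\pdf(\sv)=\prod_k\pdfc_k(\svc_k)$ and $\GPC_{\bm{i}}(\sv)=\prod_k\gpc^{(k)}_{\bm{i}(k)}(\svc_k)$. Case~1: suppose there is some index $t^c_\ell\in\anovat^c$ with $\bm{i}(t^c_\ell)\ne 0$. Then in the integral for $u_{\bm{i}}(\pv)$ the integrand depends on $\svc_{t^c_\ell}$ only through the factor $\pdfc_{t^c_\ell}(\svc_{t^c_\ell})\gpc^{(t^c_\ell)}_{\bm{i}(t^c_\ell)}(\svc_{t^c_\ell})$, since $u_{\anovat}(\pv,\sv_{\anovat})$ is independent of $\svc_{t^c_\ell}$. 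Performing that one-dimensional integral first and using the orthonormality relation \eqref{eq:gpcbasis1} against the constant polynomial $\gpc^{(t^c_\ell)}_{0}\equiv 1$ yields zero. Case~2: suppose there is some $t_\ell\in\anovat$ with $\bm{i}(t_\ell)=0$, so that $\gpc^{(t_\ell)}_{0}\equiv 1$. Then Fubini allows us to perform the integral in $\svc_{t_\ell}$ first, producing the factor $\int_{\Gamma_{t_\ell}}\pdfc_{t_\ell}(\svc_{t_\ell})\,u_{\anovat}(\pv,\sv_{\anovat})\,\rd\svc_{t_\ell}$, which vanishes by the defining ANOVA property \eqref{eq:anova_prop2}.

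Combining the two cases shows that all coefficients with $\bm{i}\notin\mathfrak{M}_{\anovat}$ are zero, so the full gPC expansion collapses to the claimed sum over $\mathfrak{M}_{\anovat}$. The formula for $u_{\bm{i}}(\pv)$ in the statement is then just the standard $L^2_{\pdf}$-projection coefficient inherited from the full expansion. The only mildly delicate point, which I would spell out carefully, is the use of Fubini to justify swapping the order of integration with the probability density factorization, and the verification that the single-variable factor $\int_{\Gamma_k}\pdfc_k\,\gpc^{(k)}_0\,\gpc^{(k)}_{\bm{i}(k)}\,\rd\svc_k=\delta_{0,\bm{i}(k)}$ really does come out of \eqref{eq:gpcbasis1} (since $\gpc^{(k)}_0\equiv 1$ under the orthonormalization convention). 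Everything else is bookkeeping with multi-indices.
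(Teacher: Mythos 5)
Your proposal is correct and follows essentially the same route as the paper's own proof: expand in the full orthonormal gPC basis, then kill the coefficients with $\bm{i}\notin\mathfrak{M}_{\anovat}$ by treating the two cases (a zero entry on $\anovat$, handled via the ANOVA mean-zero property \eqref{eq:anova_prop2}, and a nonzero entry on $\anovat^c$, handled via orthogonality to the constant polynomial). The only difference is cosmetic --- you take the two cases in the opposite order and are slightly more explicit about Fubini and the independence of $u_{\anovat}$ from the complementary variables.
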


\begin{proof}
Since $u_{\anovat}(\pv,\sv_{\anovat})\in L^2_{\pdf}(\Gamma)$ and $\{\GPC_{|\bm{i}|}\}_{|\bm{i}|=0}^{\infty}$ forms an complete  orthonormal basis of  $L^2_{\pdf}(\Gamma)$, 
we can express the generalized polynomial chaos expansion of $u_{\anovat}(\pv,\sv_{\anovat})$ as
\begin{equation}\label{eq:gpcexp_original}
u_{\anovat}(\pv,\sv_{\anovat}) = \sum_{|\bm{i}|=0}^{\infty}u_{\bm{i}}(\pv)\GPC_{\bm{i}}(\sv),
\end{equation}
where $u_{\bm{i}}(\pv)$ are the coefficients of the expansion given by
\begin{equation}\notag
\begin{split}
u_{\bm{i}}(\pv)	&= \int_{\Gamma} \pdf(\sv) u_{\anovat}(\pv,\sv_{\anovat})\GPC_{\bm{i}}(\sv)\rd \sv.
\end{split} 
\end{equation}
To prove the theorem, it suffices to show that if $\bm{i}\notin \mathfrak{M}_{\anovat}$, which means that there exists $t_k\in \anovat$ such that $\bm{i}(t_k)=0$ or there exists $t_k^c\in \anovat^c$ such that $\bm{i}(t_k^c)\neq0$, then $u_{\bm{i}}(\pv)=0$.

If there exists $t_k\in \anovat$ such that $\bm{i}(t_k)=0$, \guanjie{let $\anovas= \anovau\backslash\{t_k\}$ be the complementary set of $\{t_k\}$}, and note that $\gpc_0^{(t_k)}(\svc_{t_k})=1$. Then, we have
\begin{equation}\notag
\begin{split}
\int_{\Gamma} \pdf(\sv)u_{\anovat}(\pv,\sv_{\anovat}) \GPC_{\bm{i}}(\sv)\rd \sv
&= \int_{\Gamma_{\anovas}}\int_{\Gamma_{t_k}} \pdf_{\anovas}(\sv_{\anovas})\pdf_{t_k}(\svc_{t_k})\GPC_{\bm{i}_{\anovas}}(\sv_{\anovas})u_{\anovat}(\pv,\sv_{\anovat})\rd \svc_{t_k} \rd\sv_{\anovas}\\
&= \int_{\Gamma_{\anovas}} \left( \pdf_{\anovas}(\sv_{\anovas})\GPC_{\bm{i}_{\anovas}}(\sv_{\anovas})\int_{\Gamma_{t_k}}\pdf_{t_k}(\svc_{t_k})u_{\anovat}(\pv,\sv_{\anovat})\rd \svc_{t_k} \right)\rd\sv_{\anovas}.
\end{split}
\end{equation}
Using \eqref{eq:anova_prop2}, we obtain
\begin{equation}\notag
\begin{split}
\int_{\Gamma_{t_k}} \pdf_{t_k}(\svc_{t_k}) u_{\anovat}(\pv,\sv_{\anovat})\rd\svc_{t_k}  =0,
\end{split}
\end{equation}
which implies that
\begin{equation}\notag
\begin{split}
u_{\bm{i}}(\pv)	&= \int_{\Gamma} \pdf(\sv) u_{\anovat}(\pv,\sv_{\anovat})\GPC_{\bm{i}}(\sv)\rd \sv=0.
\end{split} 
\end{equation}

On the other hand, if there exists $t_k^c\in \anovat^c$ such that $\bm{i}(t_k^c)\neq0$, then we have
\begin{equation}\notag
\begin{split}
\int_{\Gamma} \pdf(\sv) u_{\anovat}(\pv,\sv_{\anovat})\GPC_{\bm{i}}(\sv)\rd \sv 
&= \int_{\Gamma_{\anovat}}\int_{\Gamma_{\anovat^c}} \pdf_{\anovat}(\sv_{\anovat})\pdf_{\anovat^c}(\sv_{\anovat^c}) u_{\anovat}(\pv,\sv_{\anovat}) \GPC_{\bm{i}_{\anovat}}(\sv_{\anovat})\GPC_{\bm{i}_{\anovat^c}}(\sv_{\anovat^c})\rd \sv_{\anovat}\rd \sv_{\anovat^c}\\
&=\int_{\Gamma_{\anovat}} \pdf_{\anovat}(\sv_{\anovat})u_{\anovat}(\pv,\sv_{\anovat}) \GPC_{\bm{i}_{\anovat}}(\sv_{\anovat})\rd \sv_{\anovat} \int_{\Gamma_{\anovat^c}} \pdf_{\anovat^c}(\sv_{\anovat^c}) \GPC_{\bm{i}_{\anovat^c}}(\sv_{\anovat^c})\rd \sv_{\anovat^c}.
\end{split} 
\end{equation}
Note that since the gPC basis function corresponding to $(0,\ldots,0)$ is $1$, and $\bm{i}_{\anovat^c}\neq (0,\ldots,0)$, we have
\begin{equation}\notag
\int_{\Gamma_{\anovat^c}} \pdf_{\anovat^c}(\sv_{\anovat^c}) \GPC_{\bm{i}_{\anovat^c}}(\sv_{\anovat^c})\rd \sv_{\anovat^c}=0,
\end{equation}
and thus 
\begin{equation}\notag
\begin{split}
u_{\bm{i}}(\pv)	&= \int_{\Gamma} \pdf(\sv) u_{\anovat}(\pv,\sv_{\anovat})\GPC_{\bm{i}}(\sv)\rd \sv=0.
\end{split} 
\end{equation}
\qed
\end{proof}

\begin{theorem}
Suppose that $u(\pv,\sv)$ can be expressed as 
\begin{equation}\label{eq:gpcanova_thm2}
u(\pv,\sv)= u_0(\pv) + \sum_{\anovat\in \anovaT_1}u_{\anovat}(\pv,\sv_{\anovat})+\ldots+\sum_{\anovat\in \anovaT_{\norv}}u_{\anovat}(\pv,\sv_{\anovat}),
\end{equation}
where 
\begin{equation}\label{eq:them2b}
u_{\anovat}(\pv,\sv_{\anovat})=\sum_{\bm{i}\in\mathfrak{M}_{\anovat}}u_{\bm{i}}(\pv)\GPC_{\bm{i}}(\sv), \ \anovat\in \anovaT_{\norv}^*.
\end{equation}
Then the right hand side of \eqref{eq:gpcanova_thm2} is the ANOVA decomposition of $u(\pv,\sv)$.
\end{theorem}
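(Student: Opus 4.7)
The ANOVA decomposition of $u(\pv,\sv)$ is uniquely determined by the two defining relations \eqref{eq:anova_prop1} and \eqref{eq:anova_prop2}, so the plan is simply to verify these relations for the decomposition given by \eqref{eq:gpcanova_thm2}--\eqref{eq:them2b}. The key algebraic facts I will exploit are (i) that $\GPC_{\bm{0}}(\sv)=1$, (ii) the univariate orthonormality \eqref{eq:gpcbasis1}, and (iii) the tensor-product structure $\GPC_{\bm{i}}(\sv)=\prod_{\ell=1}^{N}\gpc_{\bm{i}(\ell)}^{(\ell)}(\svc_\ell)$ combined with $\pdf(\sv)=\prod_{\ell=1}^{N}\pdfc_\ell(\svc_\ell)$, which turns every integral into a product of one-dimensional integrals.

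\textbf{Step 1: verifying \eqref{eq:anova_prop1}.} I would integrate both sides of \eqref{eq:gpcanova_thm2} against $\pdf(\sv)$ over $\Gamma$. The constant term $u_0(\pv)$ contributes $u_0(\pv)$ itself. For every non-empty $\anovat\in\anovaT_N^*$ and every $\bm{i}\in\mathfrak{M}_{\anovat}$ the multi-index $\bm{i}$ has at least one nonzero entry $\bm{i}(t_k)$ with $t_k\in\anovat$. By the tensor factorization and \eqref{eq:gpcbasis1} applied to the factor $\int_{\Gamma_{t_k}}\pdfc_{t_k}(\svc_{t_k})\gpc_{\bm{i}(t_k)}^{(t_k)}(\svc_{t_k})\rd\svc_{t_k}=\delta_{\bm{i}(t_k),0}=0$, the full integral $\int_\Gamma \pdf(\sv)\GPC_{\bm{i}}(\sv)\rd\sv$ vanishes. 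Hence all terms with $\anovat\neq\emptyset$ drop out, giving $\int_\Gamma \pdf(\sv)u(\pv,\sv)\rd\sv=u_0(\pv)$ as required.

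\textbf{Step 2: verifying \eqref{eq:anova_prop2}.} Fix a non-empty $\anovat$ and an index $t_k\in\anovat$. I would plug the gPC expansion \eqref{eq:them2b} into $\int_{\Gamma_{t_k}}\pdfc_{t_k}(\svc_{t_k})u_{\anovat}(\pv,\sv_{\anovat})\rd\svc_{t_k}$ and exchange sum and integral. Since $\bm{i}\in\mathfrak{M}_{\anovat}$ forces $\bm{i}(t_k)\neq 0$, the factor $\int_{\Gamma_{t_k}}\pdfc_{t_k}(\svc_{t_k})\gpc_{\bm{i}(t_k)}^{(t_k)}(\svc_{t_k})\rd\svc_{t_k}$ equals $\delta_{\bm{i}(t_k),0}=0$ by orthonormality against $\gpc_0^{(t_k)}\equiv 1$. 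Every term therefore vanishes, and \eqref{eq:anova_prop2} is established.

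\textbf{Step 3: uniqueness.} To conclude, I would invoke the standard uniqueness of the ANOVA decomposition: any decomposition $u=\sum_{\anovat\subseteq\anovau}v_{\anovat}(\pv,\sv_{\anovat})$ satisfying the annihilation conditions \eqref{eq:anova_prop1}--\eqref{eq:anova_prop2} must coincide termwise with the ANOVA decomposition. Since Steps~1 and~2 show our $(u_0,\{u_{\anovat}\})$ satisfies these conditions, the right-hand side of \eqref{eq:gpcanova_thm2} is the ANOVA decomposition of $u$.

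\textbf{Anticipated obstacle.} No step is genuinely hard; the main care needed is in the combinatorics of the index set $\mathfrak{M}_{\anovat}$, specifically making clear that the condition $\bm{i}(t_k)\neq 0$ holds for \emph{every} $t_k\in\anovat$, which is exactly what makes the one-dimensional orthogonality argument in Step~2 apply to any $t_k\in\anovat$. If uniqueness of the ANOVA decomposition has not been established earlier in the paper, a brief citation or a one-line recursive argument (defining $u_0$ by \eqref{eq:anova_prop1} and then inductively defining higher-order terms) would be needed to close the proof rigorously.
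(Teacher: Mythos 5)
Your proposal is correct and follows essentially the same route as the paper's own proof: verify that the right-hand side of \eqref{eq:gpcanova_thm2} satisfies the defining conditions \eqref{eq:anova_prop1} and \eqref{eq:anova_prop2} by reducing each integral to a one-dimensional orthogonality relation against $\gpc_0\equiv 1$. Your Step~3 on uniqueness is a point the paper leaves implicit (it simply asserts that verifying the two conditions suffices), so your observation that a citation or short recursive argument is needed there is a reasonable refinement rather than a departure.
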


\begin{proof}
To complete the proof, we only need to show that the right hand side of  \eqref{eq:gpcanova_thm2} satisfies~\eqref{eq:anova_prop1} and \eqref{eq:anova_prop2}. Using \eqref{eq:gpcanova_thm2} and \eqref{eq:them2b}, we have 
\begin{equation}\notag
\begin{split}
 \int_{\Gamma}\pdf(\sv) u(\pv,\sv)\rd \sv & =\int_{\Gamma}\pdf(\sv) u_0(\pv) \rd\sv + \sum_{\anovat\in \anovaT_N^*}\int_{\Gamma}\pdf(\sv) u_{\anovat}(\pv,\sv_{\anovat})\rd \sv\\
& = u_0(\pv) + \sum_{\anovat\in \anovaT_{\norv}^*}\sum_{\bm{i}\in\mathfrak{M}_{\anovat}}u_{\bm{i}}(\pv)\int_{\Gamma}\pdf(\sv) \GPC_{\bm{i}}(\sv)\rd \sv.
\end{split}
\end{equation}
Since $\anovat\neq \emptyset$ when $\anovat\in \anovaT_{\norv}^*$, we have
\begin{equation}\notag
\bm{i} \neq (0,\ldots,0), \ \bm{i}\in\mathfrak{M}_{\anovat}.
\end{equation}
Thus, 
\begin{equation}\notag
\int_{\Gamma}\pdf(\sv) \GPC_{\bm{i}}(\sv)\rd \sv=0,
\end{equation}
which implies that 
\begin{equation}\notag
\int_{\Gamma}\pdf(\sv) u(\pv,\sv)\rd \sv = u_0(\pv).
\end{equation}
To show that the right hand side of  \eqref{eq:gpcanova_thm2} satisfies \eqref{eq:anova_prop2}, suppose that $t_k\in \anovat$, $k=1,\ldots,t_{|\anovat|}$, \guanjie{and let $\anovas= \anovau\backslash\{t_k\}$ be the complementary set of $\{t_k\}$}. Then, by  \eqref{eq:them2b}, we have 
\begin{equation}\notag
\begin{split}
\int_{\Gamma_{t_k}} \pdf_{t_k}(\svc_{t_k}) u_{\anovat}(\pv,\sv_{\anovat})\rd\svc_{t_k} 
= &\sum_{\bm{i}\in\mathfrak{M}_{\anovat}}u_{\bm{i}}(\pv)\int_{\Gamma_{t_k}}\pdf_{t_k}(\svc_{t_k}) \GPC_{\bm{i}}(\sv)\rd\svc_{t_k}\\
= & \sum_{\bm{i}\in\mathfrak{M}_{\anovat}}u_{\bm{i}}(\pv)\GPC_{\bm{i}_{\anovas}}(\sv_{\anovas})\int_{\Gamma_{t_k}}\pdf_{t_k}(\svc_{t_k}) \gpc_{\bm{i}(t_k)}^{(t_k)}(\svc_{t_k})\rd\svc_{t_k}
\end{split}
\end{equation}
Recall that $\gpc_0^{(t_k)}(\svc_{t_k})=1$ and $\bm{i}(t_k)\neq0$, we have 
\begin{equation}\notag
\int_{\Gamma_{t_k}}\pdf_{t_k}(\svc_{t_k}) \gpc_{\bm{i}(t_k)}^{(t_k)}(\svc_{t_k})\rd\svc_{t_i} = 0,
\end{equation}
which implies  that 
\begin{equation}\notag
\int_{\Gamma_{t_k}} \pdf_{t_k}(\svc_{t_k}) u_{\anovat}(\pv,\sv_{\anovat})\rd\svc_{t_k} = 0,\ k =1,\ldots,t_{|\anovat|}.
\end{equation}
\qed
\end{proof}

In practical computations, the expansion given in~\eqref{eq:gpcexp1} must be truncated to a finite number of terms. Following the approach in \cite{Xiu2010}, 
we retain the terms with the total degree up to $p$. This yields the approximation
\begin{equation}\label{eq:gpcexp2}
u_{\anovat}(\pv,\sv_{\anovat}) \approx \sum_{\bm{i}\in\mathfrak{M}_{\anovat}^p}u_{\bm{i}}(\pv)\GPC_{\bm{i}}(\sv),
\end{equation}
where $\mathfrak{M}_{\anovat}^p$ is the set of multi-indices defined by
\begin{equation}\notag
\mathfrak{M}_{\anovat}^p:=\{\bm{i}| \bm{i}\in\mathfrak{M}_{\anovat}\ \mbox{and}\ |\bm{i}| \leq p \}.
\end{equation}
By inserting \eqref{eq:gpcexp2} into \eqref{eq:anova1}, we obtain the  following expansion of $u(\pv,\sv)$ in terms of gPC basis functions:
\begin{equation}\label{eq:gpcanova}
\begin{split}
u(\pv,\sv)\approx u_p(\pv,\sv) = u_0(\pv) 
+ \sum_{\anovat\in \anovaT_1}\sum_{\bm{i}\in\mathfrak{M}_{\anovat}^p}u_{\bm{i}}(\pv)\GPC_{\bm{i}}(\sv)
+\ldots 
+\sum_{\anovat\in \anovaT_{\norv}}\sum_{\bm{i}\in\mathfrak{M}_{\anovat}^p}u_{\bm{i}}(\pv)\GPC_{\bm{i}}(\sv),
\end{split}
\end{equation}
where $u_p(\pv,\sv)$ is the polynomial approximation of $u(\pv,\sv)$ with the total degree up to $p$. By employing \eqref{eq:gpcexp2}, we can easily compute the variance of $u_{\anovat}(\pv,\sv_{\anovat})$ using the following expression:
\begin{equation}\label{eq:variance1}
\var{u_{\anovat}} \approx \sum_{\bm{i}\in\mathfrak{M}_{\anovat}^p}u_{\bm{i}}^2(\pv),\ \anovat\in\anovaT_{\norv}^* .
\end{equation}
\begin{figure}[!htbp]
	\begin{center}		
		\subfloat[]{\includegraphics[width=0.41\linewidth]{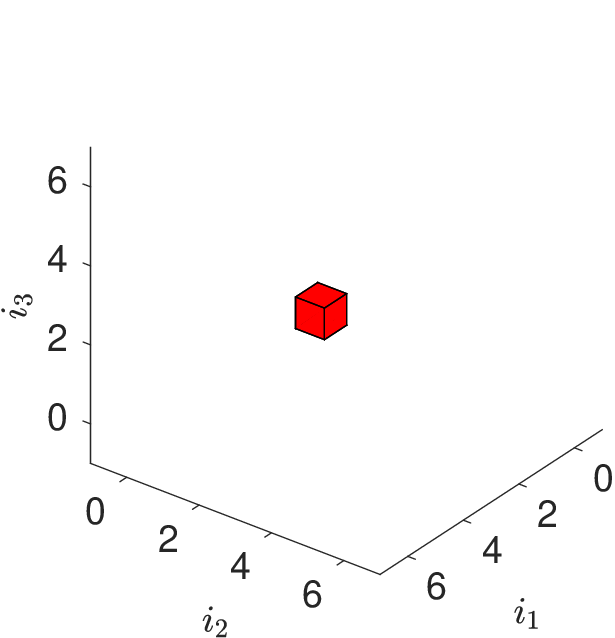}}\qquad
		\subfloat[]{
			\includegraphics[width=0.41\linewidth]{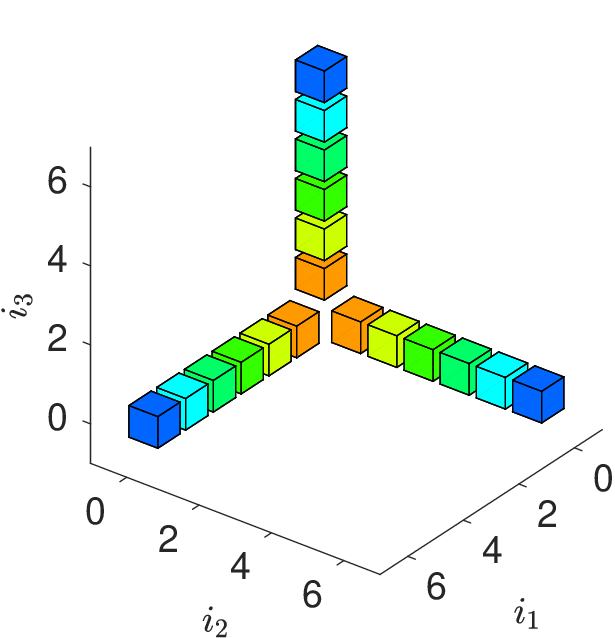}}\qquad
		\subfloat[]{\includegraphics[width=0.41\linewidth]{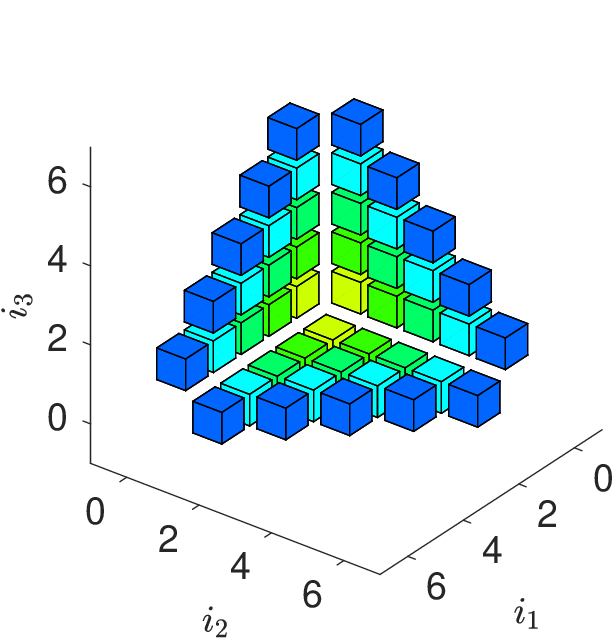}}\qquad
		\subfloat[]{\includegraphics[width=0.41\linewidth]{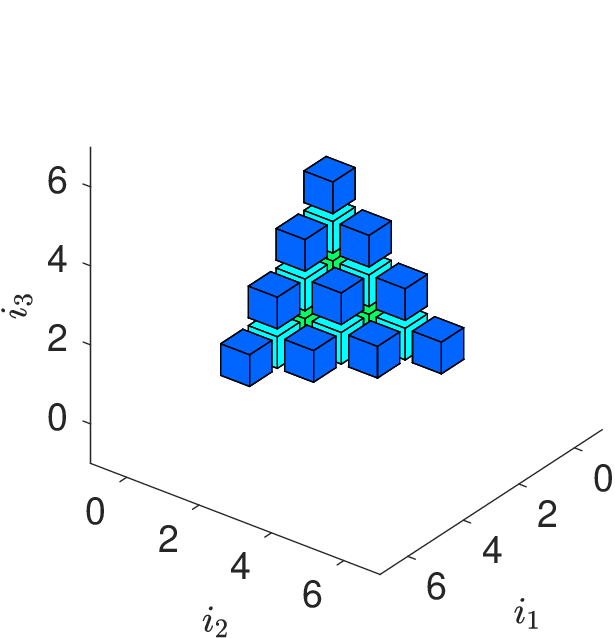}}
		\caption{Multi-indices of the gPC basis functions corresponding to the component functions of each order in $3$ dimensions with the total degree up to $6$, arranged according to the order of the component functions (from left to right): $0$-th, first, second, and third order.}\label{fig:anova}
	\end{center}
\end{figure}%
Fig. \ref{fig:anova} illustrates the multi-indices of the gPC basis functions corresponding to the component functions of each order in~\eqref{eq:gpcanova}. It is worth noting that the number of terms in \eqref{eq:gpcanova} is given by
\begin{equation}\label{eq:Vandermonde}
\binom{N}{0}\binom{p}{0}+\cdots+\binom{N}{N}\binom{p}{N} = \binom{N+p}{N},
\end{equation}
which identical to the number of terms in the generalized polynomial chaos expansion with the total degree up to $p$. The equation \eqref{eq:Vandermonde} is commonly referred to as the Vandermonde's identity or the Vandermonde's convolution. Interested readers can find more information on this topic in the relevant literature, such as \cite{Askey1975orthogonal}.

\subsection{Adaptive ANOVA stochastic Galerkin method}
Based on the adaptive ANOVA decomposition and the gPC expansion of component functions, we can develop an adaptive ANOVA stochastic Galerkin method  for the problem \eqref{eq:spde}. The idea is quite simple, namely,  we select the basis functions of stochastic space based on the adaptive ANOVA decomposition, in which the relative variance is computed by  \eqref{eq:relativevariance} and \eqref{eq:variance1}.

To give the algorithm of this procedure, some notations are needed. We first collect the basis functions associated with the $k$-th order component function $u_{\anovat}(\pv,\sv_{\anovat})$ and denote the set of their multi-indices with the total degree up to $p$ as $\mathfrak{M}_k^p:=\cup_{{\anovat}\in\mathfrak{J}_k}\mathfrak{M}_{\anovat}^p$. 
Moreover, let us denote the set of all multi-indices as $\mathfrak{M}_k^{p^\dagger}:=\mathfrak{M}_{\emptyset}\cup\mathfrak{M}_k^{p^*}$, where $\mathfrak{M}_k^{p^*}:=\cup_{{\anovat}\in\mathfrak{J}_k^*}\mathfrak{M}_{\anovat}^p.$
With those notations, 
Algorithm~\ref{alg:adaptivestoch} gives the pseudocode for the adaptive ANOVA  stochastic Galerkin method.
\begin{algorithm}[ht]
	\caption{Adaptive ANOVA stochastic Galerkin method}\label{alg:adaptivestoch}
	\begin{algorithmic}
		{\normalsize
			\STATE {\bfseries Input:} The gPC order $p$ and the tolerance $\tol$ in ANOVA decomposition.
			\STATE Set $k=1$, $\mathfrak{J}_0=\emptyset,\ \mathfrak{J}_1 = \{\{1\},\ldots,\{\norv\}\}$ and compute $\bm{A}_i$ and $\bm{f}$ defined in \eqref{eq:phy_mat}.
			\WHILE{$(k<N)$ and $\mathfrak{J}_k\neq \emptyset$}
			\STATE Generate the  multi-indices $\mathfrak{M}_{k}^{p^\dagger}$ and compute $\bm{G}_i$ and $\bm{h}$  defined in \eqref{eq:stoch_mat}.
			\STATE Solve the linear system \eqref{eq:linsg} and  compute $\gamma_{\anovat}$ for $\anovat\in \mathfrak{J}_k$ by \eqref{eq:relativevariance} and \eqref{eq:variance1}.
			\STATE Set $\tilde{\mathfrak{J}}_k:=\{\anovat\in\mathfrak{J}_k|\gamma_{\anovat}\geq\tol\}.$
			\STATE Set $\mathfrak{J}_{k+1}: = \{\anovat|\anovat\in\anovaT_{k+1}, \mbox{ and } \forall~\anovas\subseteq\anovat \mbox{ with } |\anovas|=k \mbox{ satisfies } \anovas\in\tilde{\mathfrak{J}}_k\}.$
			\STATE $k=k+1$.
			\ENDWHILE
			\STATE {\bfseries Return:} the approximation $u^{\ap}(\pv,\sv)$, the mean function $u_0(\pv)$ 
			and the variance function $ \sum_{\bm{i}\in\mathfrak{M}_{\anovat}^{p^*}}u_{\bm{i}}^2(\pv)$
		}
	\end{algorithmic}
\end{algorithm}

In the adaptive ANOVA stochastic Galerkin method, we select the set of multi-indices adaptively based on the ANOVA decomposition. Specifically, only part of the multi-indices with the total degree up to $p$ will be retained,  resulting in a much lower computational cost compared to the standard stochastic Galerkin method. It is worth noting that if the tolerance $\tol$ is chosen small enough, all multi-indices will be selected, and the adaptive ANOVA stochastic Galerkin method will become equivalent to the standard stochastic Galerkin method.

\section{Numerical results}\label{sec:numericaltests}
In this section, we will explore two problems: a diffusion problem and a Helmholtz problem. All the results presented here are obtained using MATLAB R2015b on a desktop with a 2.90GHz Intel Core i7-10700 CPU. The CPU time reported in this paper correspond to the total time required to solve the linear systems in the respective procedures. 

To assess the accuracy, we define the mean errors and the variance errors as follows:
\begin{eqnarray*}
	\mathrm{E}_{\err} = \frac{\normp{\mean{u_{\refs}(\pv,\sv)}-\mean{u^{\ap}(\pv,\sv)}}}{\normp{ \mean{u_{\refs}(\pv,\sv)}}},\label{eq:deferr}\\
	\mathrm{V}_{\err}=\frac{\normp{\var{u_{\refs}(\pv,\sv)}-\var{u^{\ap}(\pv,\sv)}}}{\normp{ \var{u_{\refs}(\pv,\sv)}}}.\label{eq:var_err}
\end{eqnarray*}
Here, $u_{\refs}(\pv,\sv)$ is the reference solution, and $u^{\ap}(\pv,\sv)$ is the approximate solution.

\subsection{Test problem 1}
In this problem, we investigate the diffusion equation with random inputs, given by
\begin{equation}\notag
\begin{split}
-\nabla\cdot(a(\pv,\sv)\nabla u(\pv,\sv)) = 1 &\quad \mbox{in}\quad D\times\Gamma,\\
u(\pv,\sv) = 0 &\quad \mbox{on}\quad \partial D\times \Gamma,\\
\end{split}
\end{equation}
where $D = [0,1]\times[0,1]$ is the spatial domain, and $\partial D$ represents the boundary of $D$. The diffusion coefficient $a(\pv,\sv)$ is modeled as a truncated Karhunen--Lo\`eve (KL) expansion~\cite{Ghanem2003,Elman2007} of a random field with a mean function $a_0(\pv)$, a standard deviation $\sigma={1}/{4}$, and the covariance function $\mathrm{Cov}\,(\bm{x},\bm{y})$ given by
\begin{equation}\notag
\mathrm{Cov}\,(\bm{x},\bm{y})=\sigma^2 \exp\left(-\frac{|x_1-y_1|}{c}-\frac{|x_2-y_2|}{c}
\right),\label{covariance}
\end{equation}
where $\pv=[x_1,x_2]^T$, $\bm{y}=[y_1,y_2]^T$ and $c=1/4$ is the correlation length.
The KL expansion takes the form
\begin{equation}\label{eq:kl}
a(\pv,\sv)=a_0(\pv)+\sum_{i=1}^{N}a_i(\pv)\svc_i=a_0(\pv)+\sum_{i=1}^{N}\sqrt{\lambda_i}c_i(\pv)\svc_i,
\end{equation}
where $a_0(\pv)=1$, $\{\lambda_i,c_i(\pv)\}_{i=1}^{N}$ are the eigenpairs of  $\mathrm{Cov}\,(\pv,\bm{y})$,
$\{\svc_i\}^{N}_{i=1}$ are uncorrelated random variables, and
$N$ is the number of KL modes retained. 

\begin{table}[ht]%
	\caption{Parameters of the diffusion coefficient $a(\pv,\sv)$ in \eqref{eq:kl} and $K$ in \eqref{eq:affine1}.}\label{tab:diff1_set}
	\begin{center}%
		\newcolumntype{C}{>{\centering\arraybackslash}X}%
		\begin{tabularx}{0.9\linewidth}{CCCCCC}%
			\toprule
			Case  &   $N$ & $K$ \\%
			\midrule
			I     &   $10$ & $11$\\%
			II    &   $50$ & $51$\\%
			\bottomrule%
		\end{tabularx}%
	\end{center}%
\end{table}%

For this test problem, we assume that the random variables  $\{\svc_i\}^{N}_{i=1}$ are independent and uniformly distributed within the range $[-1,1]$. The parameters of $a(\pv,\sv)$ are set as shown in Table~\ref{tab:diff1_set}. In the physical domain, the meshgrid is set to $33\times33$ (i.e., $\nphy=33$). In the stochastic space, the total degree of gPC in the  adaptive ANOVA stochastic Galerkin (AASG) method is specified as $p=5$. The linear systems arising from both the standard and the adaptive ANOVA stochastic Galerkin methods are solved using the preconditioned conjugate gradients (CG) method, with a tolerance of $10^{-8}$ and the mean based preconditioner~\cite{Powell2009}. 

\guanjie{In this test problem, we compare the the adaptive ANOVA stochastic Galerkin (AASG) method with the anchored ANOVA stochastic collocation (AASC) method \cite{Ma2010adaptive,Yang2012adaptive} and the Monte Carlo method (MCM). For the AASC method, we follow the method described in~\cite{Ma2010adaptive} 
 with the relative mean $\tilde{\gamma}_{\anovat}$, i.e.,
	\begin{equation}\notag
	\tilde{\gamma}_{\anovat}:= \frac{\normp{\mean{u_{\anovat}}}}{\normp{\mean{u_{0}}}}, \ \anovat\in\mathfrak{J}_k^*,
	\end{equation}
	as the criterion for selecting important terms within the ANOVA decomposition. Additionally, the mean value of $\sv$ is used as the anchor point. In the AASC method, we adopt tensor style Gaussian quadrature points with a grid level of 5 as the collocation points, resulting in a total of $6^{|\anovat|}$ collocation (quadrature) points for each $\anovat \in \mathfrak{J}_k^*$. Furthermore, for the AASC method, the variance function is computed following the  method proposed in~\cite{Tang2015sensitivity}. For both the MCM and the AASC method, the linear systems are solved using the MATLAB backslash solver.}

\subsubsection{Case I: a $10$ dimensional diffusion problem}
We consider the AASG method with decreasing tolerances $\tol = \{10^{-1},10^{-3},10^{-5},10^{-7},10^{-9}\}$ to demonstrate its effectiveness and efficiency. To access the accuracy,  we obtain the reference solution $u_{\refs}(\pv,\sv)$ using the standard stochastic Galerkin method with the total degree of up to $p=7$.

\begin{table}[ht]
	\caption{Performance of the AASG method for test problem 1 with $N=10$.}\label{tab:diff1a}
	{\normalsize
		\begin{center}
			\begin{tabular}{lccccccccccccc}
				\toprule
				$\tol$ & $\vert\mathfrak{J}_1\vert$  &$\vert\tilde{\mathfrak{J}}_1\vert$ & $\vert\mathfrak{J}_2\vert$ & $\vert \tilde{\mathfrak{J}}_2\vert$ & $\vert\mathfrak{J}_3\vert$  & $\vert \tilde{\mathfrak{J}}_3\vert$ & $\vert\mathfrak{J}_4\vert$  & $\vert \tilde{\mathfrak{J}}_4\vert$ & $\vert\mathfrak{J}_5\vert$  & $\vert \tilde{\mathfrak{J}}_5\vert$ & $k$ &$\vert{\mathfrak{M}}_k^{5^\dagger}\vert$ & CPU time\\
				\midrule				
				$10^{-1}$  &   10  &    1  &    0  &    0  &    0  &    0  &    0  &    0  &    0  &    0  & 1  &    51  & 0.07\\
				$10^{-3}$  &   10  &   10  &   45  &    2  &    0  &    0  &    0  &    0  &    0  &    0  & 2  &   501  & 0.87\\
				$10^{-5}$  &   10  &   10  &   45  &   37  &   70  &    0  &    0  &    0  &    0  &    0  & 3  &  1201  & 3.35\\
				$10^{-7}$  &   10  &   10  &   45  &   45  &  120  &   75  &   60  &    0  &    0  &    0  & 4  &  2001  & 9.38\\
				$10^{-9}$  &   10  &   10  &   45  &   45  &  120  &  120  &  210  &  127  &   70  &    0  & 5  &  2821  & 18.71\\
				\bottomrule
			\end{tabular}
		\end{center}
	}
\end{table}
Table \ref{tab:diff1a} presents the number of active indices for each order of the ANOVA decomposition and the total number of selected gPC basis functions in the stochastic space. Furthermore, we report the computational time required for solving all linear systems that arise during the execution of the while loop in Algorithm~\ref{alg:adaptivestoch}. It can be observed that the number of selected gPC basis functions increases as the tolerance $\tol$ decreases, and therefore, the accuracy can be improved by reducing $\tol$. For a $10$ dimensional problem, the number of gPC basis functions with the total degree up to $5$ is $C_{15}^5=3003$. From the table, it can be seen that when $\tol=10^{-9}$, almost all the gPC basis functions are selected in the AASG method. Further decreasing the tolerance in the AASG method results in the selection of all gPC basis functions with the total degree up to $5$, making the AASG method equivalent to the standard stochastic Galerkin method.

\begin{figure}[!htbp]
	\begin{center}
		\subfloat[$\tol = 10^{-3}$.]{\includegraphics[height=0.27\linewidth]{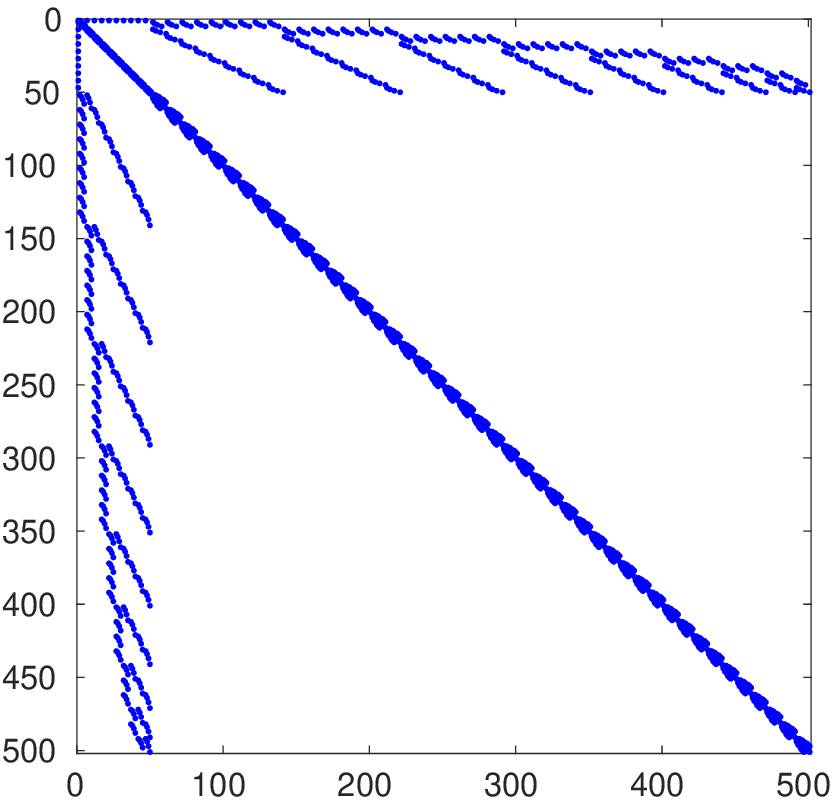}} \qquad\quad
		\subfloat[$\tol = 10^{-5}$.]{\includegraphics[height=0.27\linewidth]{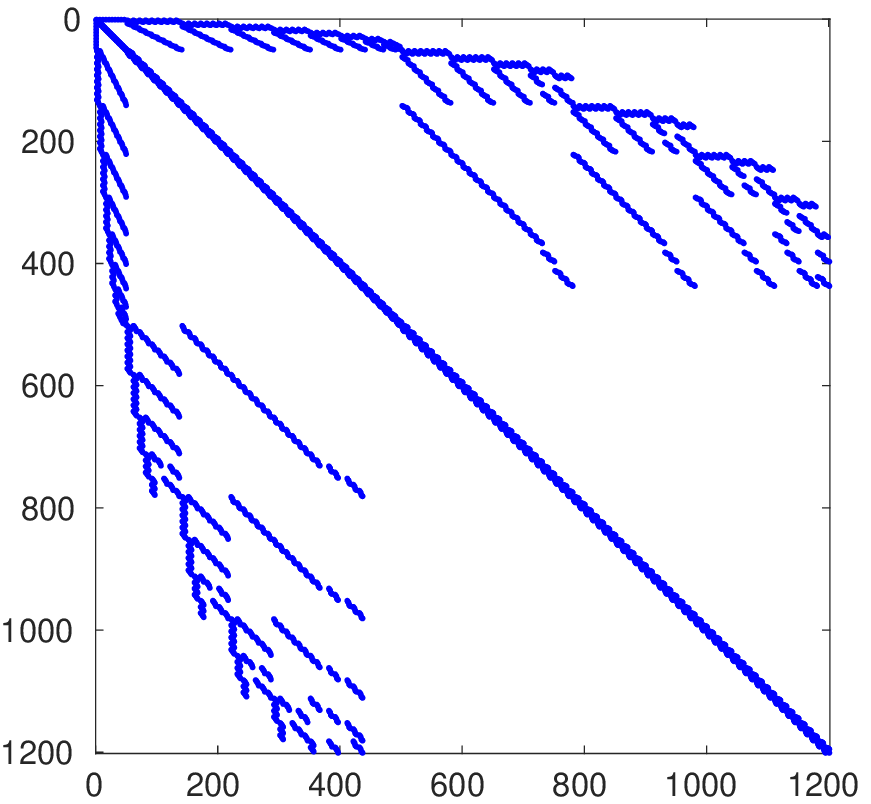}} \qquad\quad
		\subfloat[$\tol = 10^{-7}$.]{\includegraphics[height=0.27\linewidth]{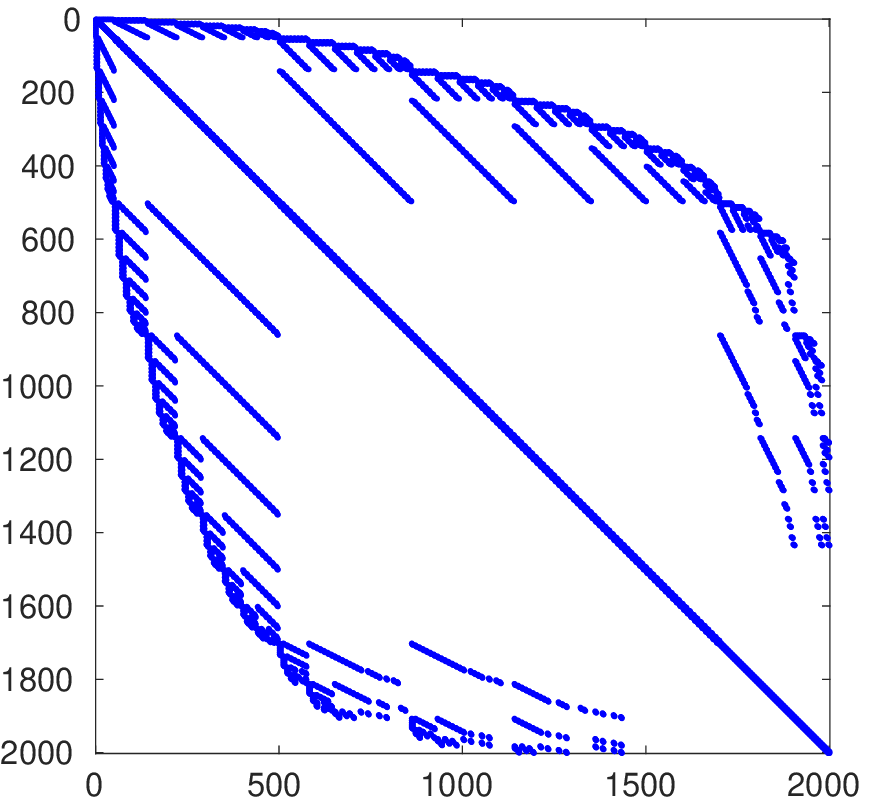}}
		\caption{Matrix block-structure (each block has dimension $\nphy\times\nphy$) for test problem 1 with $N=10$.}\label{fig:diff1a}
	\end{center}
\end{figure}

Fig. \ref{fig:diff1a} displays the block structure of the coefficient matrix of the resulting linear system. Each point in the figure represents a block of dimension $\nphy\times\nphy$. Furthermore, each nonzero block of the coefficient matrix has the same sparsity pattern as the corresponding deterministic problem. Therefore, the coefficient matrix is extremely large and sparse, and the resulting linear system should be solved using iterative methods.

\begin{figure}[!htbp]
	\begin{center}		
		\subfloat[Mean errors w.r.t. CPU time.]{\includegraphics[width=0.46\linewidth]{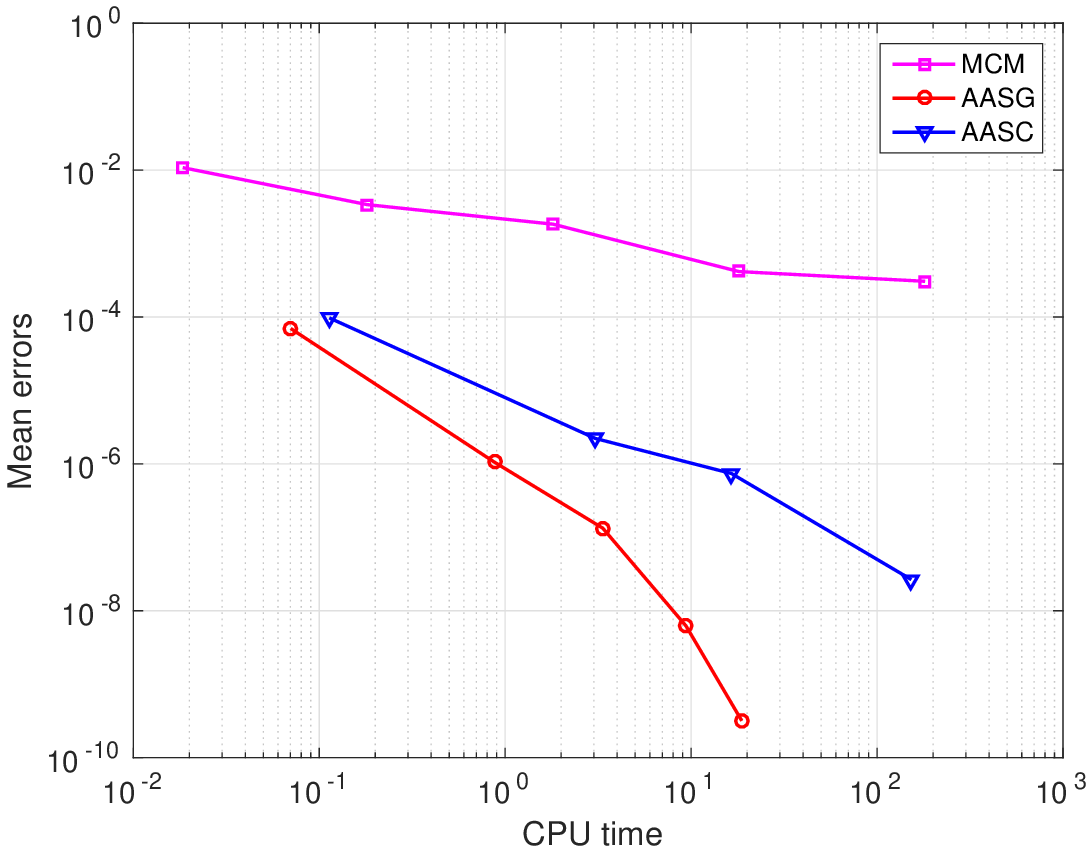}}\quad
		\subfloat[Variance errors w.r.t. CPU time.]{\includegraphics[width=0.46\linewidth]{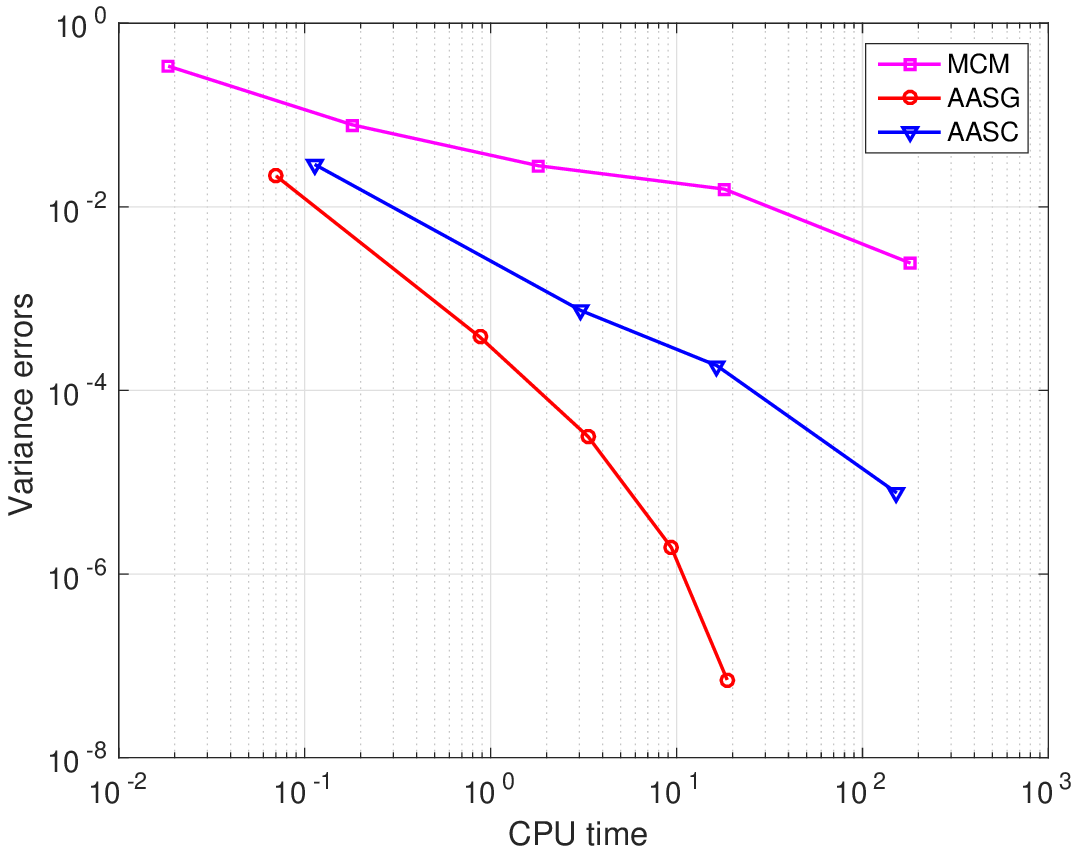}}\\
		\subfloat[Mean errors w.r.t. stochastic DOF.]{\includegraphics[width=0.46\linewidth]{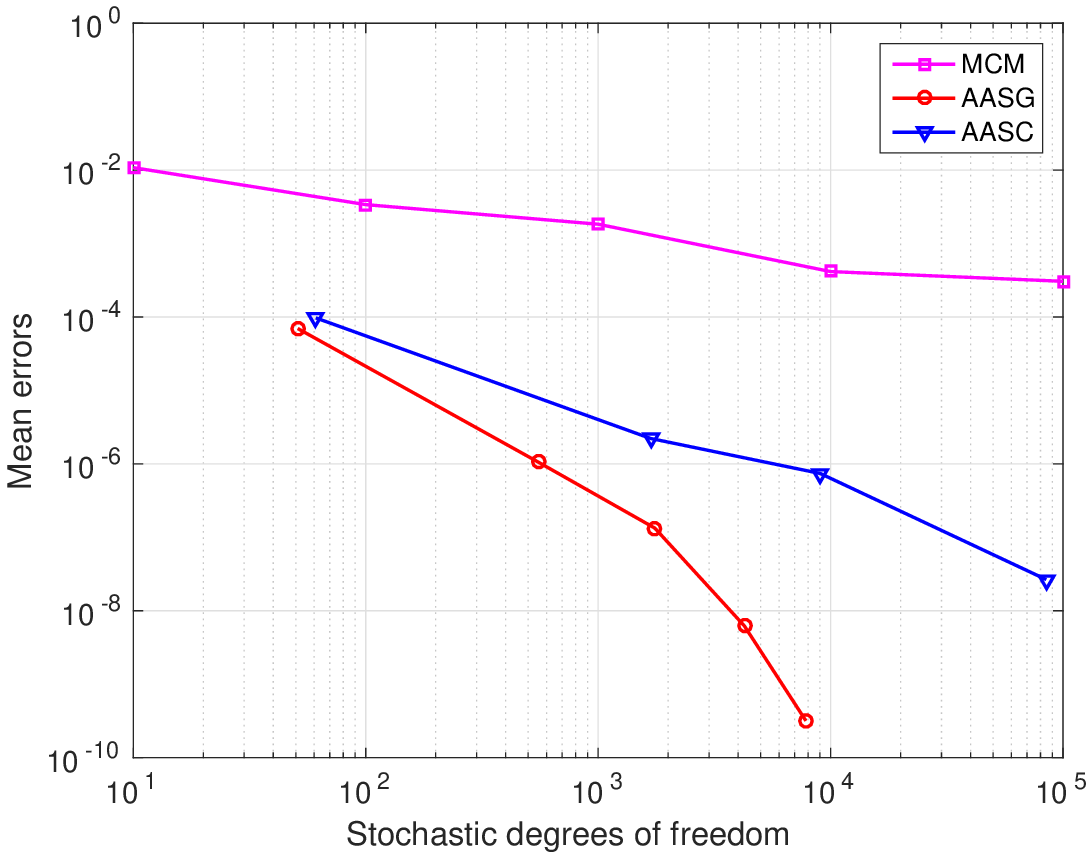}}\quad
		\subfloat[Variance errors w.r.t. stochastic DOF.]{\includegraphics[width=0.46\linewidth]{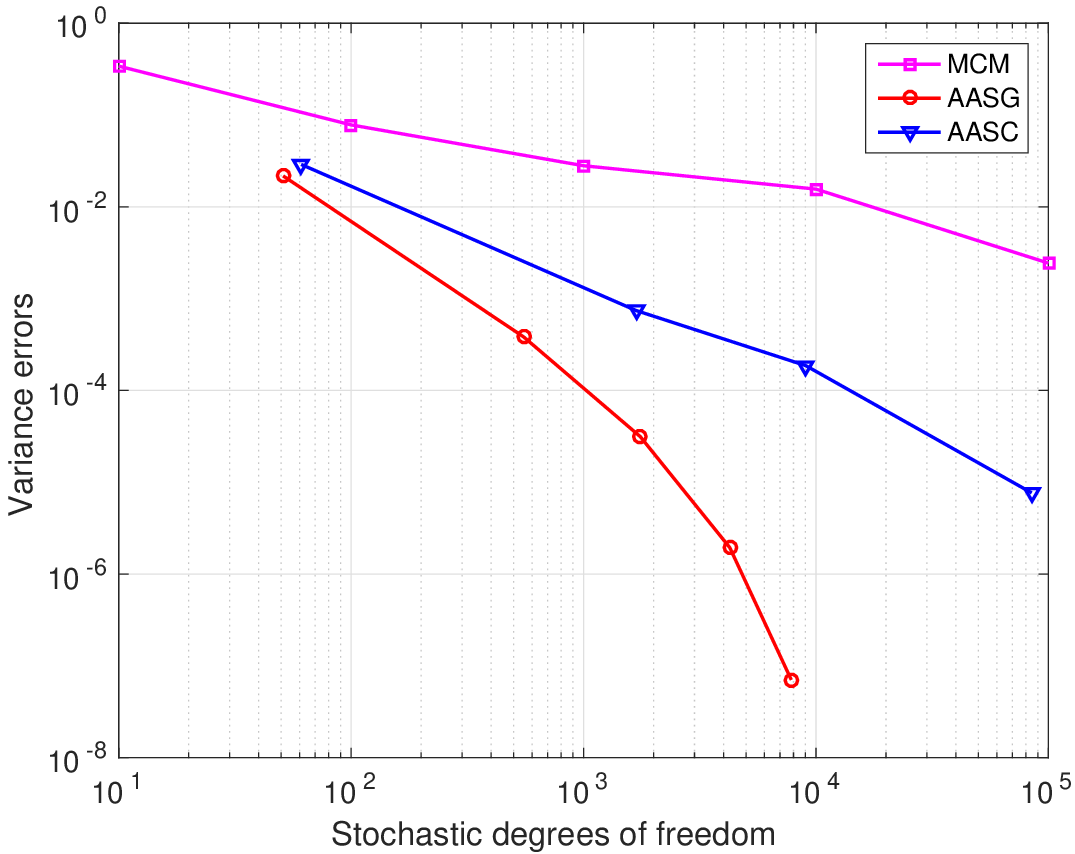}}\\
	\end{center}
	\caption{\guanjie{Comparison of errors with respect to CPU times and stochastic degrees of freedom for test problem 1 with $N=10$, where both the total degree of gPC in the AASG method and the grid level in the AASC method are set to $5$.}}\label{fig:diff1b}
\end{figure}

\guanjie{Fig. \ref{fig:diff1b} investigates the accuracy achieved by the three methods, presenting errors concerning both CPU time and stochastic degrees of freedom (DOF). For clarity, CPU time denotes the total time required to solve all the linear systems within the respective procedures. For the AASG method, stochastic degrees of freedom encompass the cumulative count of gPC basis functions generated during the execution of the while loop in Algorithm~\ref{alg:adaptivestoch}, while for the MCM and the AASC method, it corresponds to the total number of sample points used. The results indicate the notable efficiency of both the AASG method and the AASC method in comparison to the MCM, as they are hundreds of times faster in terms of CPU time and stochastic degrees of freedom. Furthermore, the AASG method outperforms the AASC method in terms of CPU time, and it is also evident that the AASG method provides a higher accuracy per stochastic degree of freedom compared to the AASC method.}

\subsubsection{Case II: a 50 dimensional diffusion problem}
We consider the AASG method with decreasing tolerances $\tol = \{10^{-1},10^{-2},10^{-3},10^{-4},10^{-5}\}$ to demonstrate its effectiveness and efficiency. To access the accuracy of the AASG method and the MCM,  we obtain the reference solution $u_{\refs}(\pv,\sv)$ using the AASG method with a tolerance of $\tol=10^{-6}$.

\begin{table}[ht]
	\caption{Performance of the AASG method for test problem 1 with $N=50$.}\label{tab:diff2a}
	{\normalsize
		\begin{center}
			\begin{tabular}{lccccccccccccc}
				\toprule
				$\tol$ & $\vert\mathfrak{J}_1\vert$  &$\vert\tilde{\mathfrak{J}}_1\vert$ & $\vert\mathfrak{J}_2\vert$ & $\vert \tilde{\mathfrak{J}}_2\vert$ & $\vert\mathfrak{J}_3\vert$  & $\vert \tilde{\mathfrak{J}}_3\vert$ & $\vert\mathfrak{J}_4\vert$  & $\vert \tilde{\mathfrak{J}}_4\vert$  & $k$ &$\vert{\mathfrak{M}}_k^{5^\dagger}\vert$ & CPU time\\
				\midrule				
				$10^{-1}$  &   50  &    1  &    0  &    0  &    0  &    0  &    0  &    0  & 1  &   251  & 0.39\\
				$10^{-2}$  &   50  &   11  &   55  &    0  &    0  &    0  &    0  &    0  & 2  &   801  & 1.99\\
				$10^{-3}$  &   50  &   30  &  435  &    0  &    0  &    0  &    0  &    0  & 2  &  4601  & 16.37\\
				$10^{-4}$  &   50  &   50  &  1225  &   15  &    8  &    0  &    0  &    0  & 3  & 12581  & 84.39\\
				$10^{-5}$  &   50  &   50  &  1225  &   83  &  120  &    0  &    0  &    0  & 3  & 13701  & 88.55\\
				$10^{-6}$  &   50  &   50  &  1225  &  377  &  1537  &   15  &    1  &    0  & 4  & 27876  & 284.41\\
				\bottomrule
			\end{tabular}
		\end{center}
	}
\end{table}

Table \ref{tab:diff2a} presents the number of active indices for each order of ANOVA decomposition and the total number of selected gPC basis functions in the stochastic space. Furthermore, we report the computational time required for solving all linear systems that arise during the execution of the while loop in Algorithm~\ref{alg:adaptivestoch}. It can be observed that the number of selected gPC basis functions increases as the tolerance $\tol$ decreases, and therefore, the accuracy can be improved by reducing $\tol$. It is worth noting that the number of gPC basis functions with the total degree up to $p=5$ is $C_{55}^5=3478761$, which renders the standard stochastic Galerkin method practically infeasible for solving this problem in reasonable time.

\begin{figure}[!htbp]
	\begin{center}
		\subfloat[$\tol = 10^{-2}$.]{\includegraphics[height=0.27\linewidth]{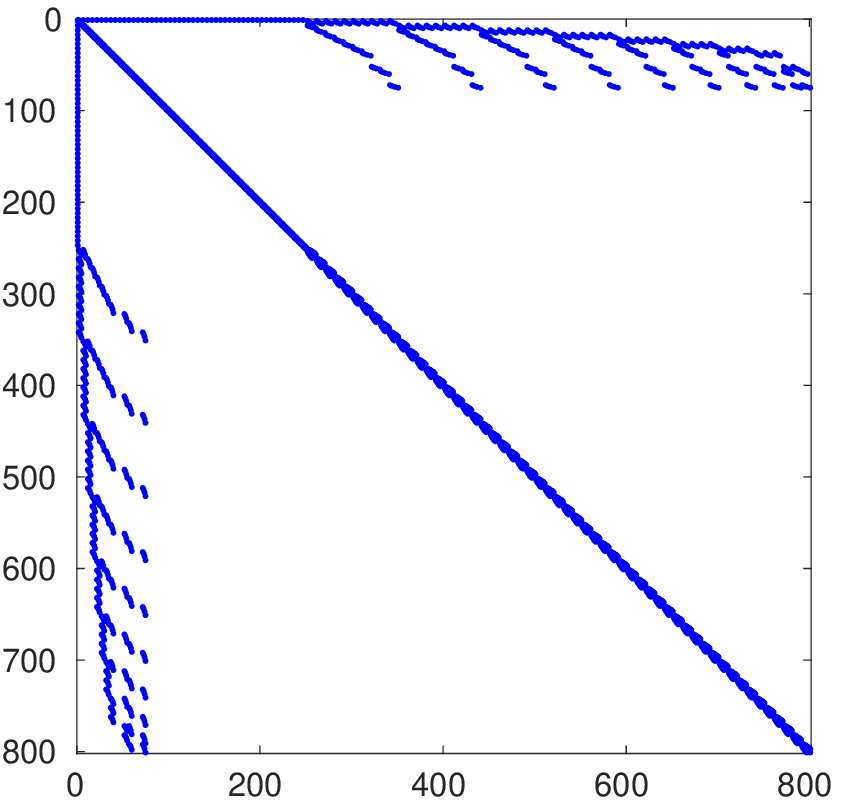}} \qquad\quad
		\subfloat[$\tol = 10^{-3}$.]{\includegraphics[height=0.27\linewidth]{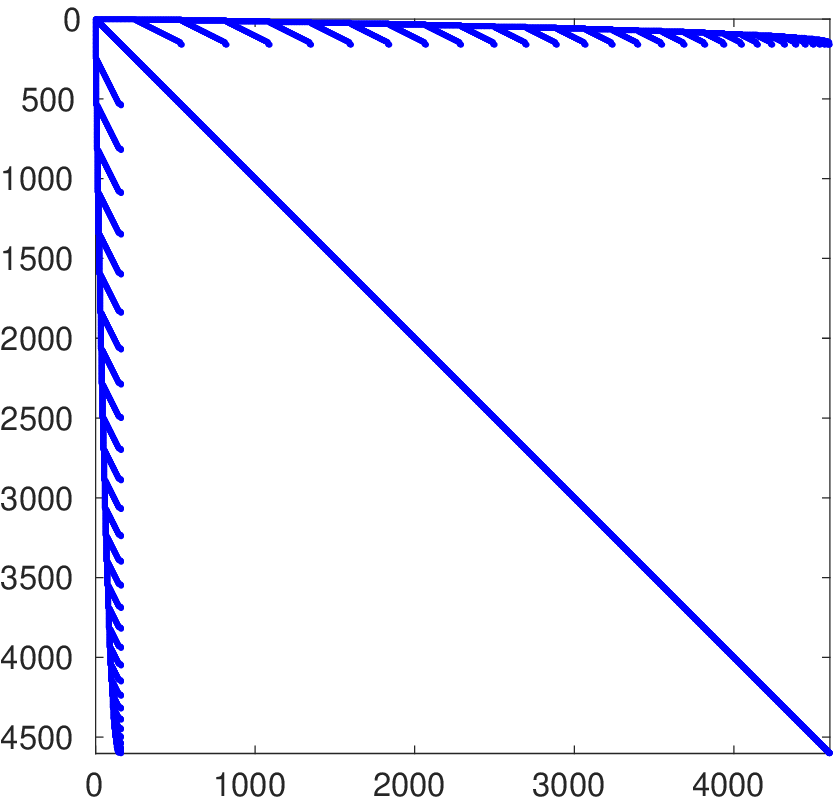}} \qquad\quad
		\subfloat[$\tol = 10^{-4}$.]{\includegraphics[height=0.27\linewidth]{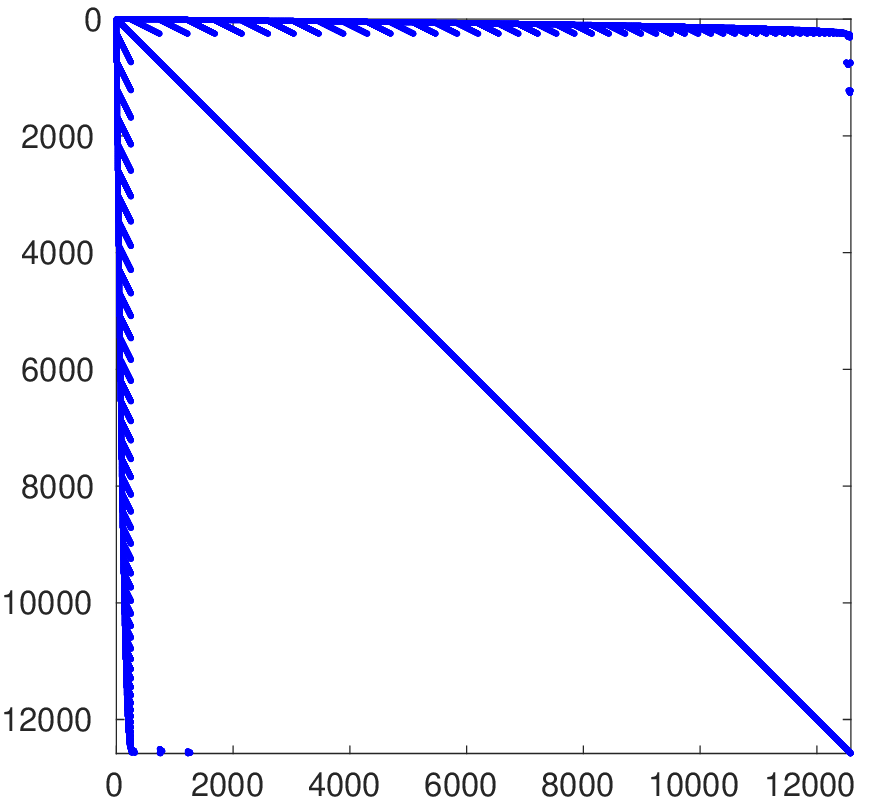}}
		\caption{Matrix block-structure (each block has dimension $\nphy\times\nphy$) for test problem 1 with $N=50$.}\label{fig:diff2a}
	\end{center}
\end{figure}

Fig. \ref{fig:diff2a} displays the block structure of the coefficient matrix of the resulting linear system. Each point in the figure represents a block of dimension $\nphy\times\nphy$. Furthermore, each nonzero block of the coefficient matrix has the same sparsity pattern as the corresponding deterministic problem. It is evident from the figure that the coefficient matrix in this case exhibits a sparser pattern than that of Case I.

\begin{figure}[!htbp]
	\begin{center}		
		\subfloat[Mean errors w.r.t. CPU time.]{\includegraphics[width=0.46\linewidth]{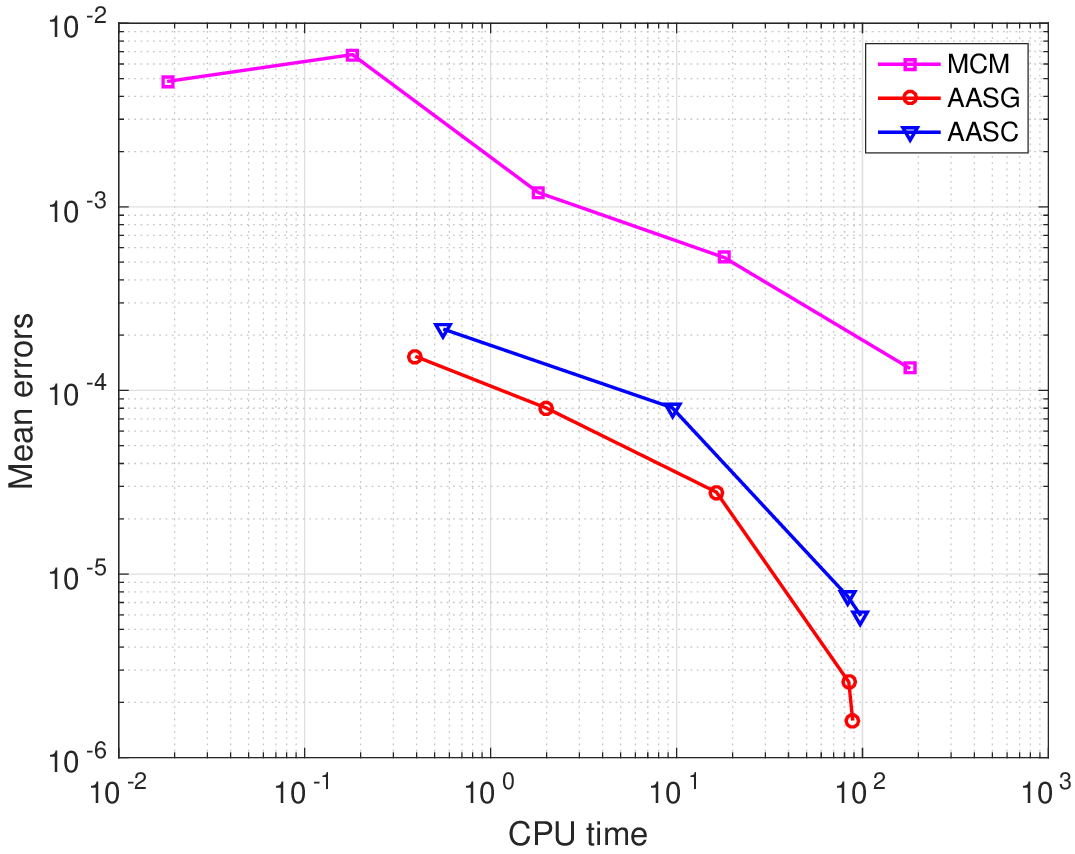}}\qquad
		\subfloat[Variance errors w.r.t. CPU time.]{\includegraphics[width=0.46\linewidth]{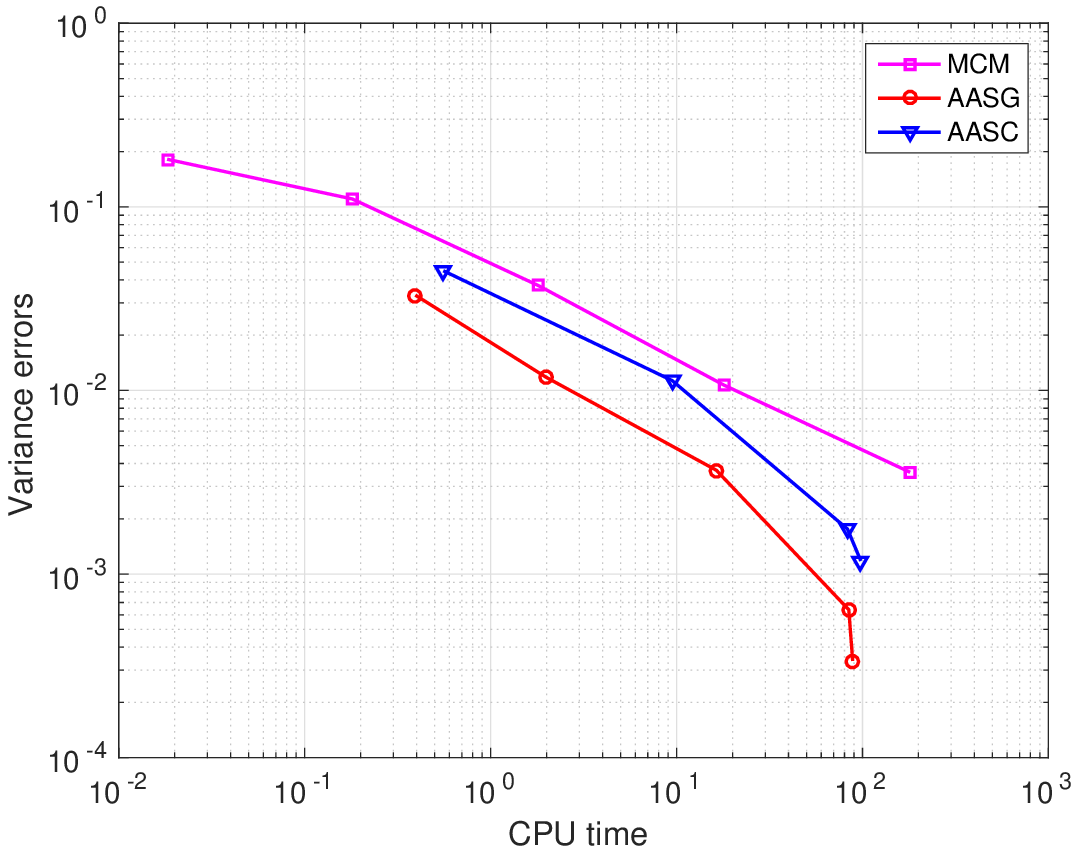}}\\
		\subfloat[Mean errors w.r.t. stochastic DOF.]{\includegraphics[width=0.46\linewidth]{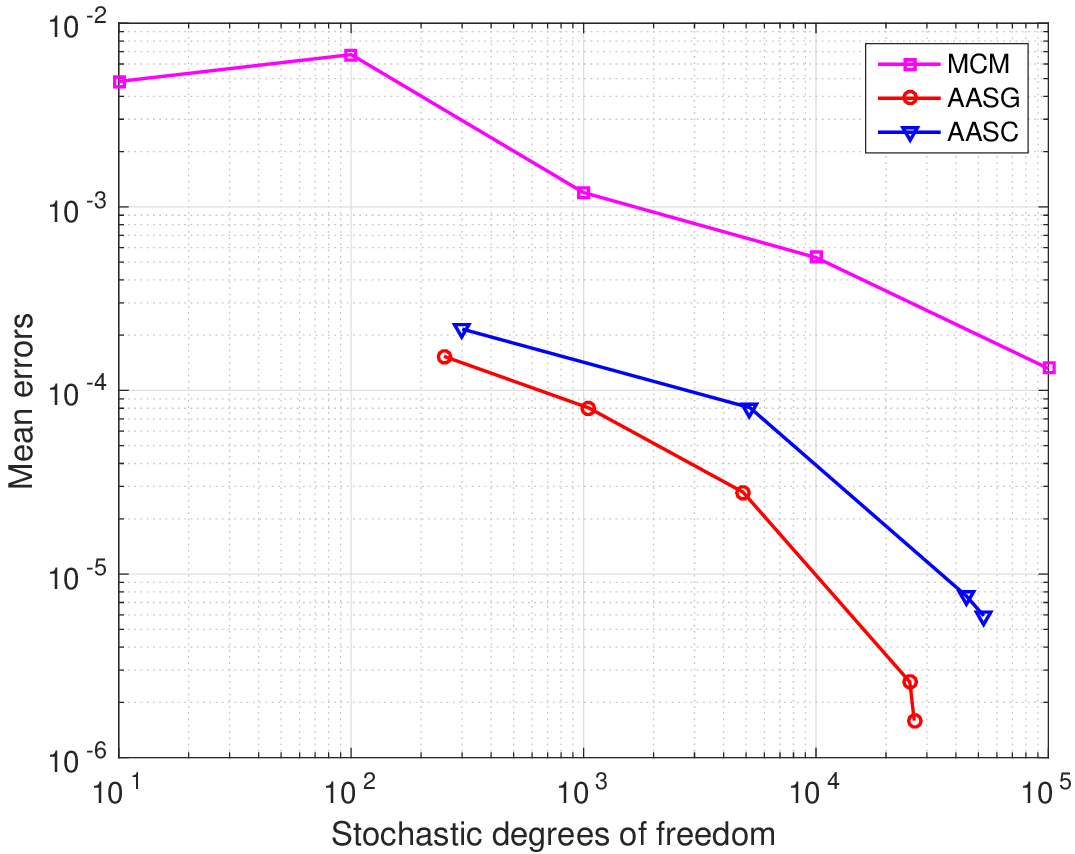}}\qquad
		\subfloat[Variance errors w.r.t. stochastic DOF.]{\includegraphics[width=0.46\linewidth]{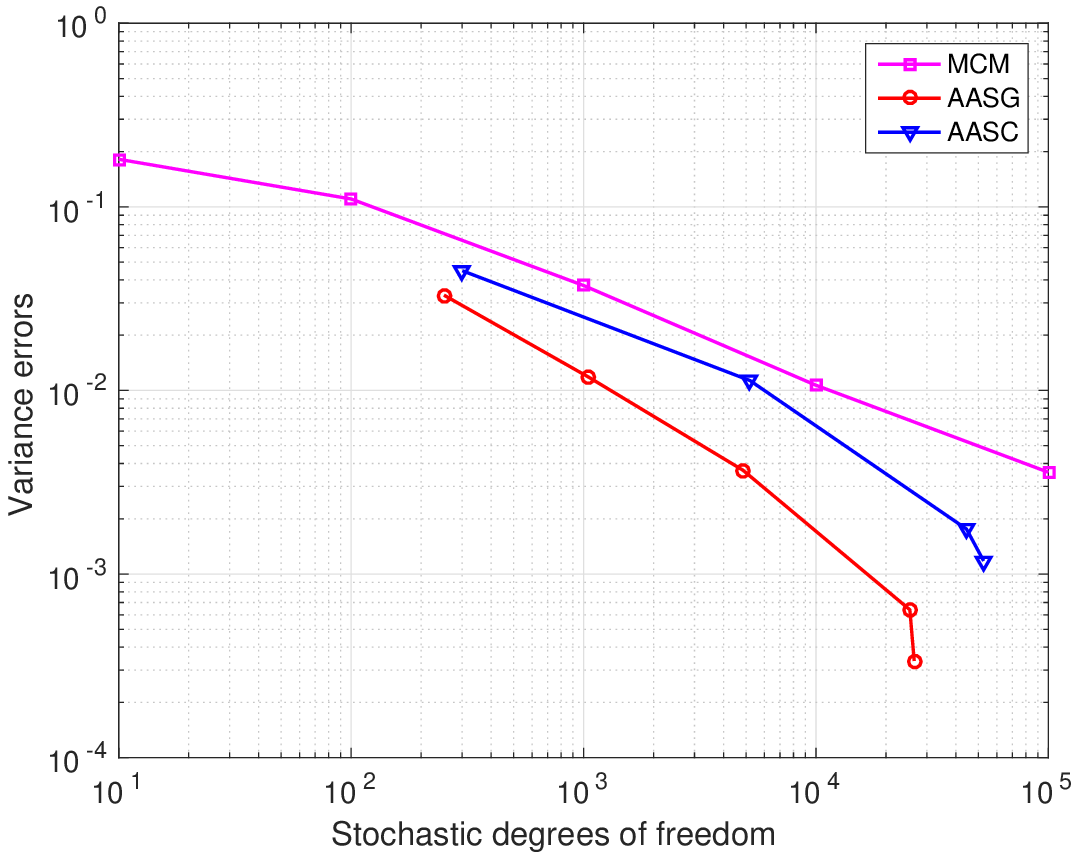}}
	\end{center}
	\caption{\guanjie{Comparison of errors with respect to CPU times and stochastic degrees of freedom for test problem 1 with $N=50$, where both the total degree of gPC in the AASG method and the grid level in the AASC method are set to $5$.}}\label{fig:diff2b}
\end{figure}

\guanjie{The plot displayed in Fig. \ref{fig:diff2b} compares the CPU times and the stochastic degrees of freedom required by the three methods in relation to mean and variance errors. The results clearly indicate that both the AASG method and the AASC method are significantly more efficient than the MCM, with improvements observed in both CPU time and stochastic degrees of freedom. Additionally, the AASG method outperforms the AASC method in terms of CPU time, and it is also evident that the AASG method provides a higher accuracy per stochastic degree of freedom compared to the AASC method.}

\subsection{Test problem 2}

In this test problem, we consider the stochastic Helmholtz problem given by
\begin{equation}\notag
\nabla^2u + a^2(\pv,\sv)u=f(\pv) \quad \mbox{in}\quad D\times\Gamma,
\end{equation}
with Sommerfeld radiation boundary condition.
Here, $D = [0,1]^2$ is the domain of interest and the Helmholtz coefficient $a(\pv,\sv)$ is a truncated KL expansion of a random field with a mean function $a_0(\pv)$, a standard deviation $\sigma=2\pi$, and the covariance function $\mathrm{Cov}\,(\bm{x},\bm{y})$ given by
\begin{equation}\notag
\mathrm{Cov}\,(\bm{x},\bm{y})=\sigma^2 \exp\left(-\frac{|x_1-y_1|}{c}-\frac{|x_2-y_2|}{c}
\right),
\end{equation}
where $\pv=[x_1,x_2]^T$, $\bm{y}=[y_1,y_2]^T$ and $c=1$ is the correlation length. Note that the KL expansion takes the form
\begin{equation}\label{eq:kl2}
a(\pv,\sv)=a_0(\pv)+\sum_{i=1}^{N}a_i(\pv)\svc_i=a_0(\pv)+\sum_{i=1}^{N}\sqrt{\lambda_i}c_i(\pv)\svc_i,
\end{equation}
where $a_0(\pv)=4\cdot(2\pi)$, $\{\lambda_i,c_i(\pv)\}_{i=1}^{N}$ are the eigenpairs of  $\mathrm{Cov}\,(\pv,\bm{y})$,
$\{\svc_i\}^{N}_{i=1}$ are uncorrelated random variables, and
$N$ is the number of KL modes retained. The  Gaussian point source at the center of the domain is used as the source term, i.e.,
\begin{equation}\notag
	f(\pv) = \mbox{e}^{-(8\cdot4)^2((x_1-0.5)^2+(x_2-0.5)^2)}
\end{equation}

For this test problem, we assume that the random variables  $\{\svc_i\}^{N}_{i=1}$ are independent and uniformly distributed within the range $[-1,1]$. The parameters of $a(\pv,\sv)$ are set as shown in Table~\ref{tab:helm_set}. We use the perfectly matched layers (PML) to simulate the Sommerfeld condition~\cite{Berenger94}, and generate the matrices $\{\bm{A}_i\}_{i=1}^K$ using the codes associated with  \cite{Liu2015Additive}. In the physical domain, the meshgrid is set to $33\times33$ (i.e., $\nphy=33$). In the stochastic space, the total degree of the gPC basis functions in the AASG method is set to $p=6$. The linear systems arising from both the standard stochastic Galerkin method and the AASG method are solved using the preconditioned bi-conjugate gradient stabilized (Bi-CGSTAB) method, with a tolerance of $10^{-8}$ and the mean based preconditioner~\cite{Powell2009}. Furthermore, for the MCM, the linear systems is solved using the MATLAB backslash solver.

\begin{table}[ht]%
	\caption{Parameters of the Helmholtz coefficient $a(\pv,\sv)$ in \eqref{eq:kl2} and $K$ in \eqref{eq:affine1}.}\label{tab:helm_set}
	\begin{center}%
		\newcolumntype{C}{>{\centering\arraybackslash}X}%
		\begin{tabularx}{0.9\linewidth}{CCCCCC}%
			\toprule
			Case   		&  $N$ & $K$ \\%
			\midrule
			I           & $4$ & $25$\\%
			II   		& $10$ & $121$\\%
			\bottomrule%
		\end{tabularx}%
	\end{center}%
\end{table}%

\subsubsection{Case I: a 4 dimensional Helmholtz problem}
We consider the AASG method with decreasing tolerances $\tol = \{10^{-1},10^{-2},10^{-3},10^{-4},10^{-5}\}$ to demonstrate its effectiveness and efficiency. To access the accuracy of the AASG method and the MCM,  we obtain the reference solution $u_{\refs}(\pv,\sv)$ using  
the standard stochastic Galerkin method with the total degree of up to $p=8$.

\begin{table}[!htbp]
	\caption{Performance of the AASG method for test problem 2 with $N=4$.}\label{tab:helm1a}
	{\normalsize
		\begin{center}
			\begin{tabular}{lccccccccccc}
				\toprule
				$\tol$ & $\vert\mathfrak{J}_1\vert$  &$\vert\tilde{\mathfrak{J}}_1\vert$ 
				& $\vert\mathfrak{J}_2\vert$  & $\vert \tilde{\mathfrak{J}}_2\vert$ 
				& $\vert\mathfrak{J}_3\vert$  & $\vert \tilde{\mathfrak{J}}_3\vert$ 
				& $\vert\mathfrak{J}_4\vert$  & $\vert \tilde{\mathfrak{J}}_4\vert$ 
				& $k$ &$\vert{\mathfrak{M}}_k^{6^\dagger}\vert$ & CPU time\\
				\midrule
				$10^{-1}$  &    4  &    1  &    0  &    0  &    0  &    0  &    0  &    0  & 1  &    25  & 0.22\\
				$10^{-2}$  &    4  &    4  &    6  &    3  &    0  &    0  &    0  &    0  & 2  &   115  & 1.52\\
				$10^{-3}$  &    4  &    4  &    6  &    5  &    2  &    1  &    0  &    0  & 3  &   155  & 3.62\\
				$10^{-4}$  &    4  &    4  &    6  &    6  &    4  &    3  &    0  &    0  & 3  &   195  & 4.46\\
				$10^{-5}$  &    4  &    4  &    6  &    6  &    4  &    4  &    1  &    1  & 4  &   210  & 7.95\\
				\bottomrule
				
			\end{tabular}
		\end{center}
	}
\end{table}

Table \ref{tab:helm1a} presents the number of active indices for each order of ANOVA decomposition and the total number of selected gPC basis functions in the stochastic space. Furthermore, we report the computational time required for solving all linear systems that arise during the execution of the while loop in Algorithm~\ref{alg:adaptivestoch}. It can be observed that the number of selected gPC basis functions increases as the tolerance $\tol$ decreases. For a $4$ dimensional problem, there are $C_{10}^6=210$ gPC basis functions with the total degree up to $6$.  From the table, it can be seen that when $\tol=10^{-5}$, all the gPC basis functions are selected in the AASG method,  making the AASG method equivalent to the standard stochastic Galerkin method.

\begin{figure}[!htbp]
	\begin{center}
		\subfloat[$\tol = 10^{-2}$.]{\includegraphics[height=0.27\linewidth]{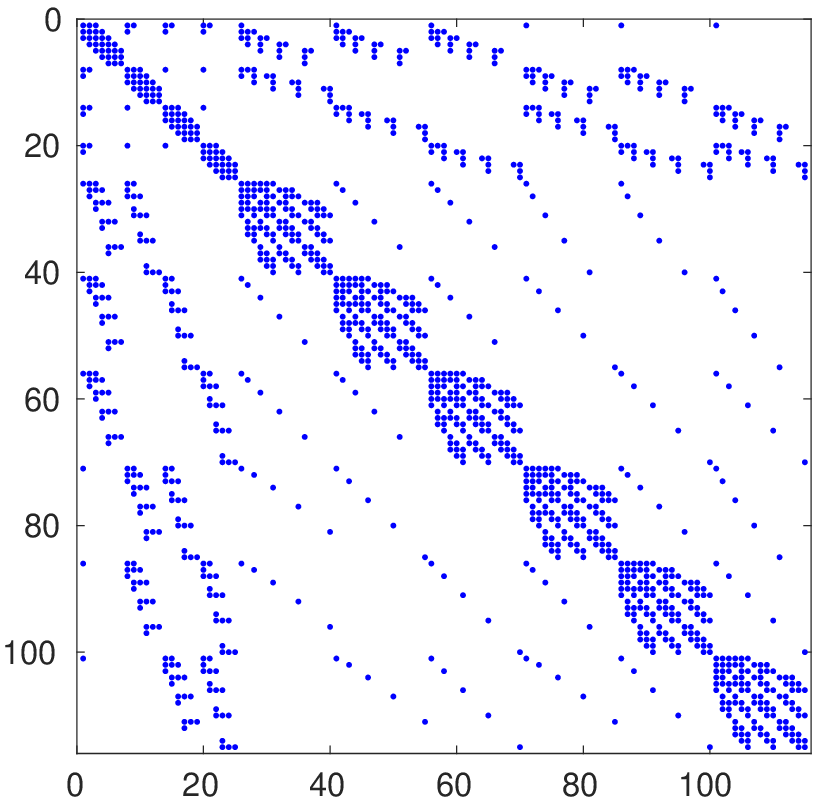}} \qquad\quad
		\subfloat[$\tol = 10^{-3}$.]{\includegraphics[height=0.27\linewidth]{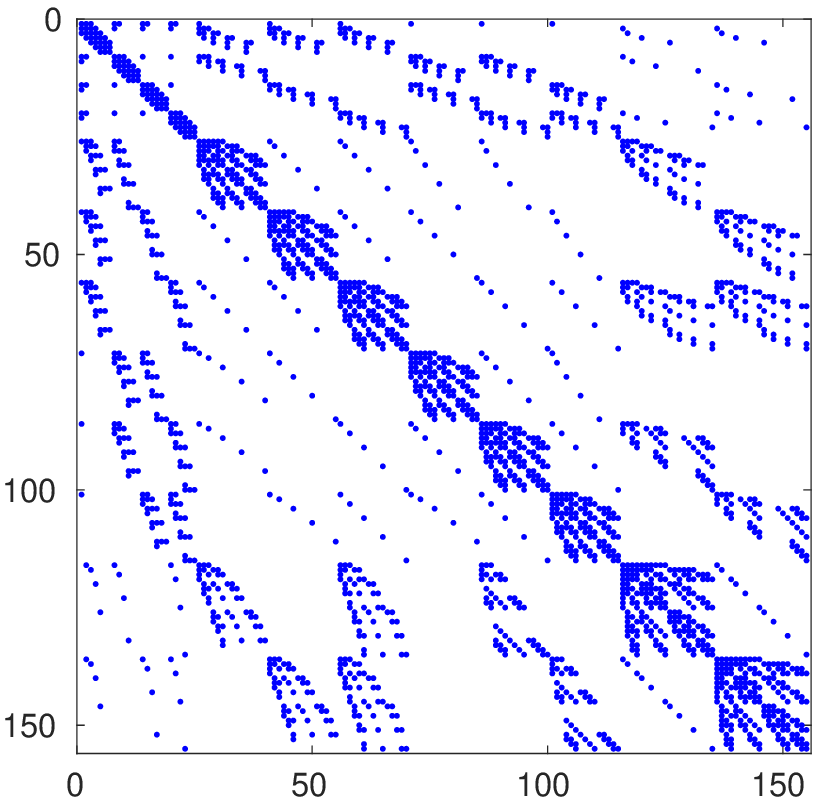}} \qquad\quad
		\subfloat[$\tol = 10^{-4}$.]{\includegraphics[height=0.27\linewidth]{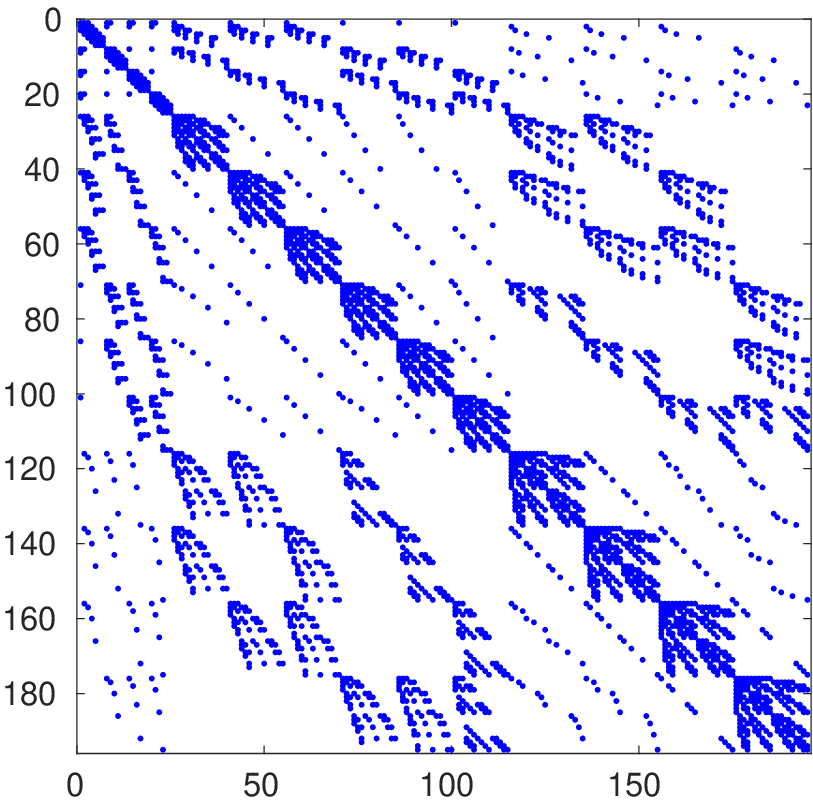}}
		\caption{Matrix block-structure (each block has dimension $\nphy\times\nphy$) for test problem 2 with $N=4$.}\label{fig:helm1b}
	\end{center}
\end{figure}

Fig.~\ref{fig:helm1b} illustrates the block structure of the coefficient matrix of the resulting linear system. Each point in the figure represents a block of dimension $\nphy\times\nphy$. Moreover, each nonzero block of the coefficient matrix has the same sparsity pattern as the corresponding deterministic problem. Although the coefficient matrix of the Helmholtz equation is much denser than that of the diffusion equation, it is still very sparse and thus should be solved by iterative methods.

\begin{figure}[!htbp]
	\begin{center}		
		\subfloat[Mean errors w.r.t. CPU time.]{\includegraphics[width=0.46\linewidth]{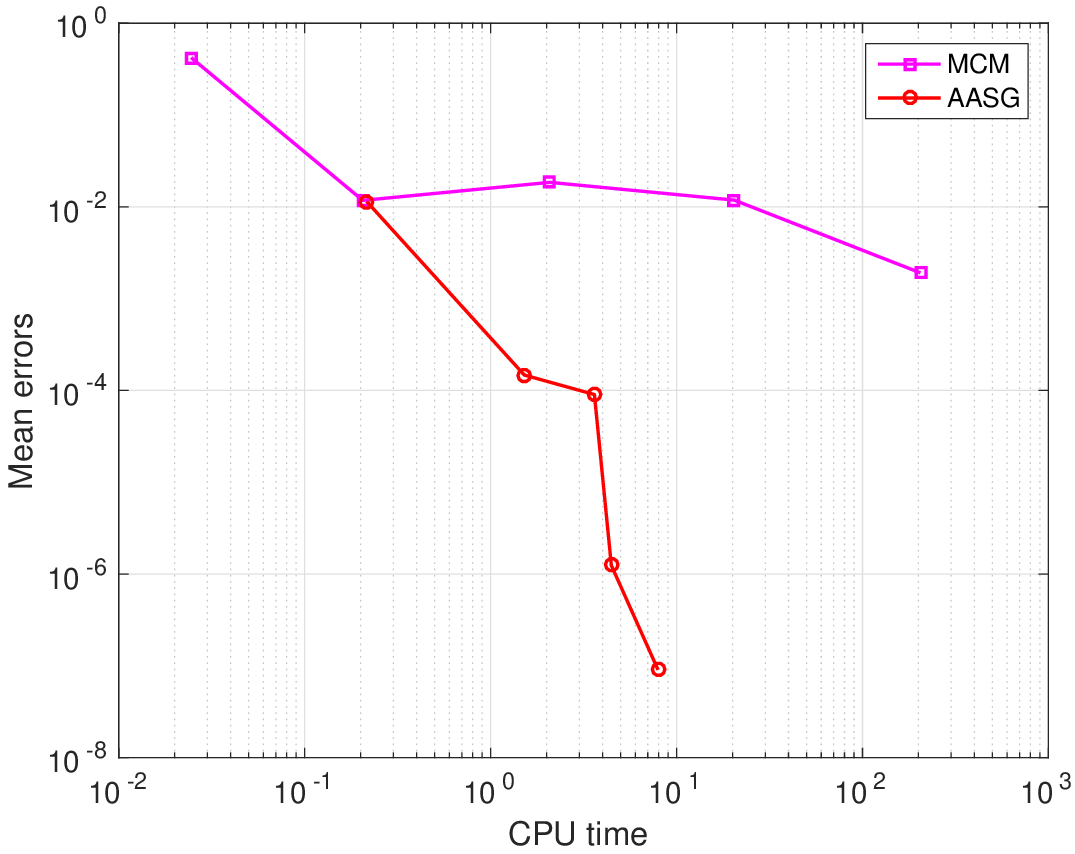}}\qquad
		\subfloat[Variance errors w.r.t. CPU time.]{\includegraphics[width=0.46\linewidth]{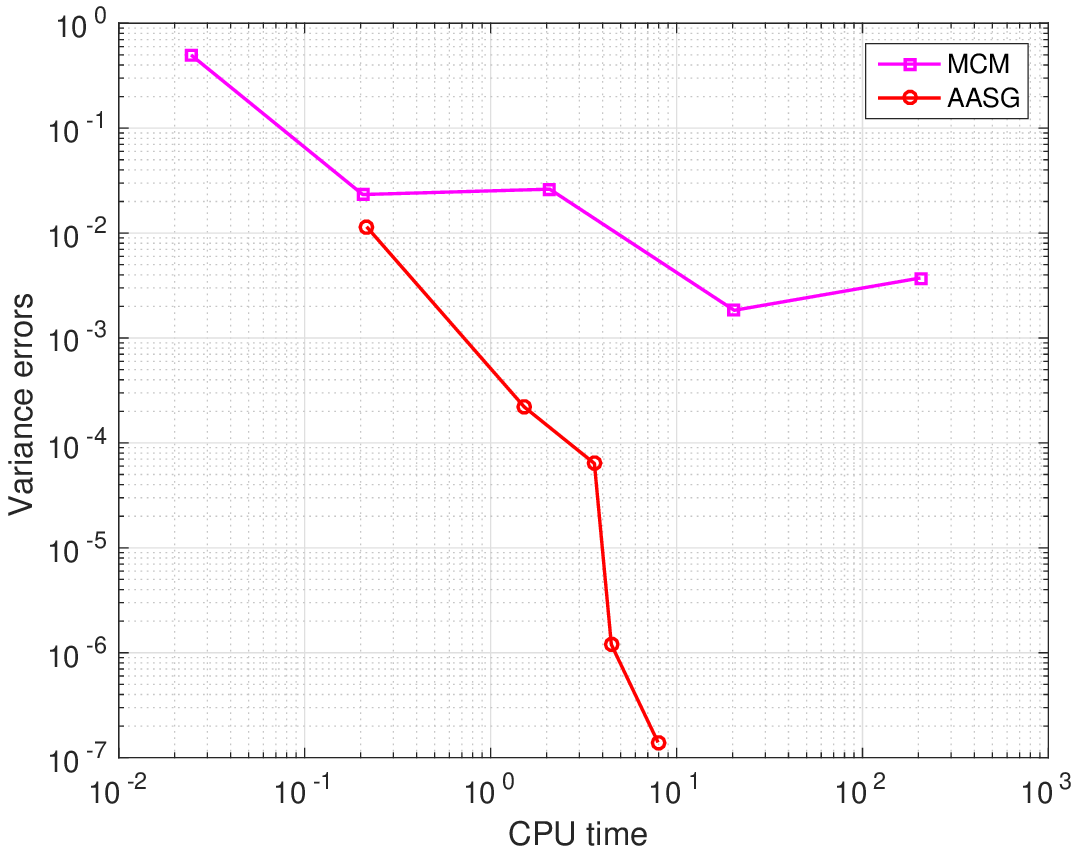}}\\
		\subfloat[Mean errors w.r.t. DOF.]{\includegraphics[width=0.46\linewidth]{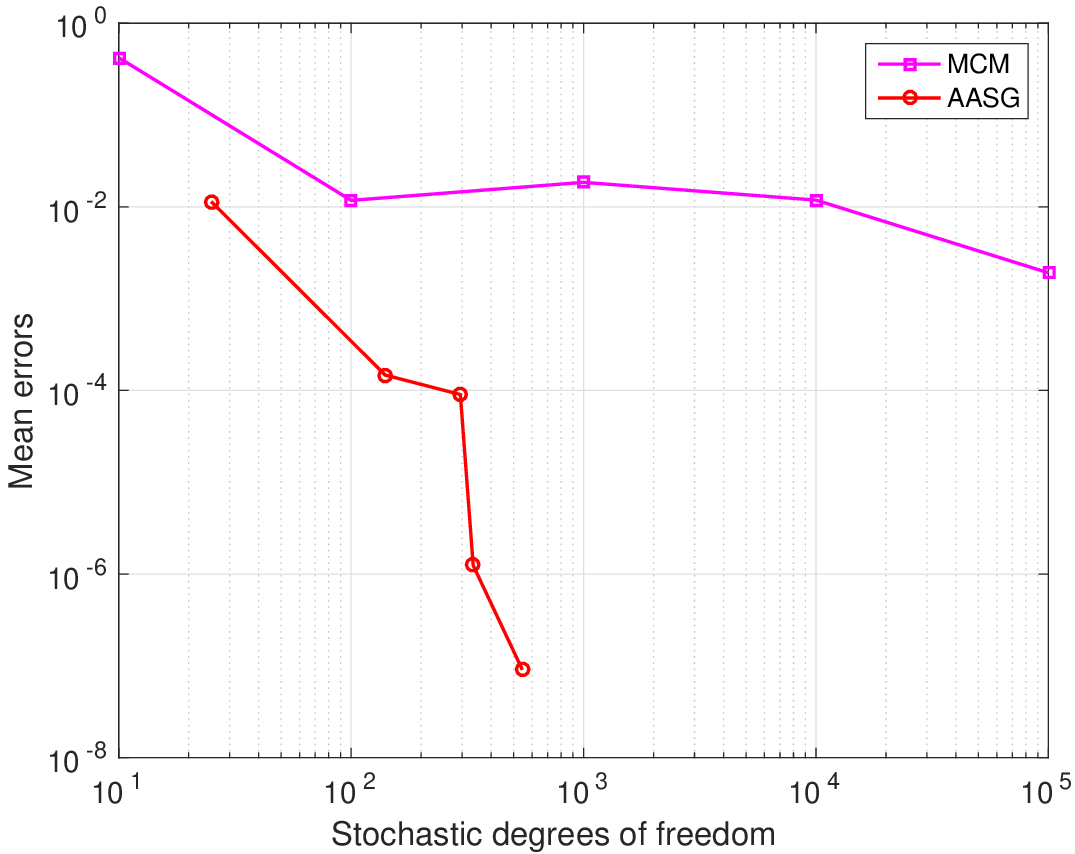}}\qquad
		\subfloat[Variance errors w.r.t. DOF.]{\includegraphics[width=0.46\linewidth]{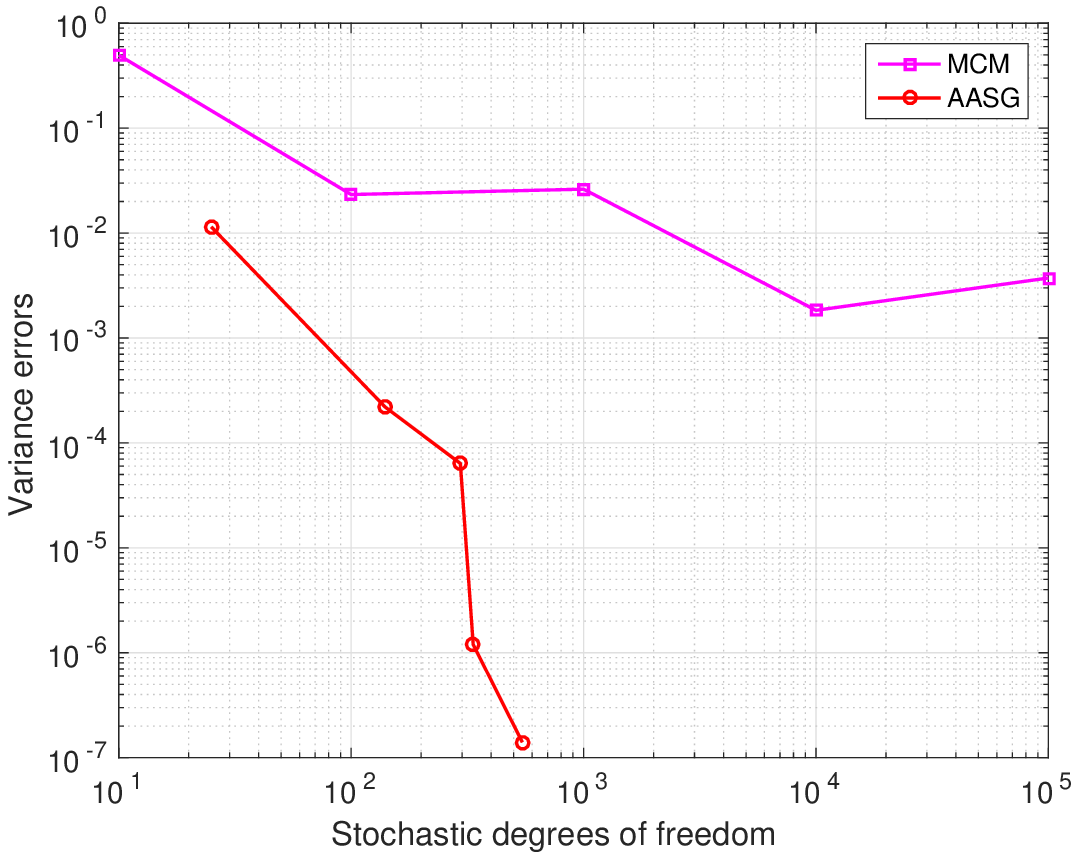}}
	\end{center}
	\caption{\guanjie{Comparison of errors with respect to CPU times and stochastic degrees of freedom for test problem 2 with $N=4$, where the total degree of gPC in the AASG method is set to $6$.}}\label{fig:helm1c}
\end{figure}

\guanjie{Fig. \ref{fig:helm1c} investigates the accuracy achieved by the two methods, presenting errors concerning both CPU time and stochastic degrees of freedom. For clarity, CPU time denotes the total time required to solve all the linear systems within the respective procedures. For the AASG method, stochastic degrees of freedom encompass the cumulative count of gPC basis functions generated during the execution of the while loop in Algorithm~\ref{alg:adaptivestoch}, while for the MCM, it corresponds to the total number of sample points used. The results indicate the notable efficiency of the AASG method in comparison to the MCM, as it is dozens of times faster in terms of CPU time and hundreds of times faster in terms of stochastic degrees of freedom. This distinction from the diffusion problem is noteworthy, as the performance of CPU time does not seem to align with that of stochastic degrees of freedom. This phenomenon can be attributed to the fact that the number of matrix-vector products computed in each iteration for solving the linear system~\eqref{eq:linsg} is proportional to $(N+1)^2$ for Helmholtz problems, while it is proportional to $N+1$ for diffusion problems.}

\subsubsection{Case II: a 10 dimensional Helmholtz problem}
We consider the AASG method  with decreasing tolerances $\tol = \{10^{-1},10^{-2},10^{-3},10^{-4},10^{-5}\}$ to demonstrate its effectiveness and efficiency. To access the accuracy of the AASG method and the MCM,  we obtain the reference solution $u_{\refs}(\pv,\sv)$ using  
the standard stochastic Galerkin method with the total degree of up to $p=8$.

\begin{table}[!htbp]
	\caption{Performance of the AASG method for test problem 2 with $N=10$.}\label{tab:helm2a}
	{\normalsize
		\begin{center}
			\begin{tabular}{lcccccccccccc}
				\toprule
				$\tol$ & $\vert\mathfrak{J}_1\vert$  &$\vert\tilde{\mathfrak{J}}_1\vert$ 
				& $\vert\mathfrak{J}_2\vert$  & $\vert \tilde{\mathfrak{J}}_2\vert$ 
				& $\vert\mathfrak{J}_3\vert$  & $\vert \tilde{\mathfrak{J}}_3\vert$ 
				& $\vert\mathfrak{J}_4\vert$  & $\vert \tilde{\mathfrak{J}}_4\vert$ 
				& $k$ &$\vert{\mathfrak{M}}_k^{6^\dagger}\vert$ & CPU time\\
				\midrule				
				$10^{-1}$  &   10  &    1  &    0  &    0  &    0  &    0  &    0  &    0  &    1  &    61  & 0.72\\
				$10^{-2}$  &   10  &    9  &   36  &    6  &    0  &    0  &    0  &    0  &    2  &   601  & 14.03\\
				$10^{-3}$  &   10  &   10  &   45  &   14  &    7  &    3  &    0  &    0  &    3  &   876  & 41.77\\
				$10^{-4}$  &   10  &   10  &   45  &   32  &   55  &   21  &    0  &    0  &    3  &  1836  & 82.92\\
				$10^{-5}$  &   10  &   10  &   45  &   43  &  105  &   59  &   41  &   22  &    4  &  3451  & 281.31\\
				\bottomrule
			\end{tabular}
		\end{center}
	}
\end{table}

Table \ref{tab:helm2a} presents the number of active indices for each order of ANOVA decomposition and the total number of selected gPC basis functions in the stochastic space. Furthermore, we report the computational time required for solving all linear systems that arise during the execution of the while loop in Algorithm~\ref{alg:adaptivestoch}. It can be observed that the number of selected gPC basis functions increases as the tolerance $\tol$ decreases, and therefore, the accuracy can be improved by reducing $\tol$.

\begin{figure}[!htbp]
	\begin{center}
		\subfloat[$\tol = 10^{-2}$.]{\includegraphics[height=0.27\linewidth]{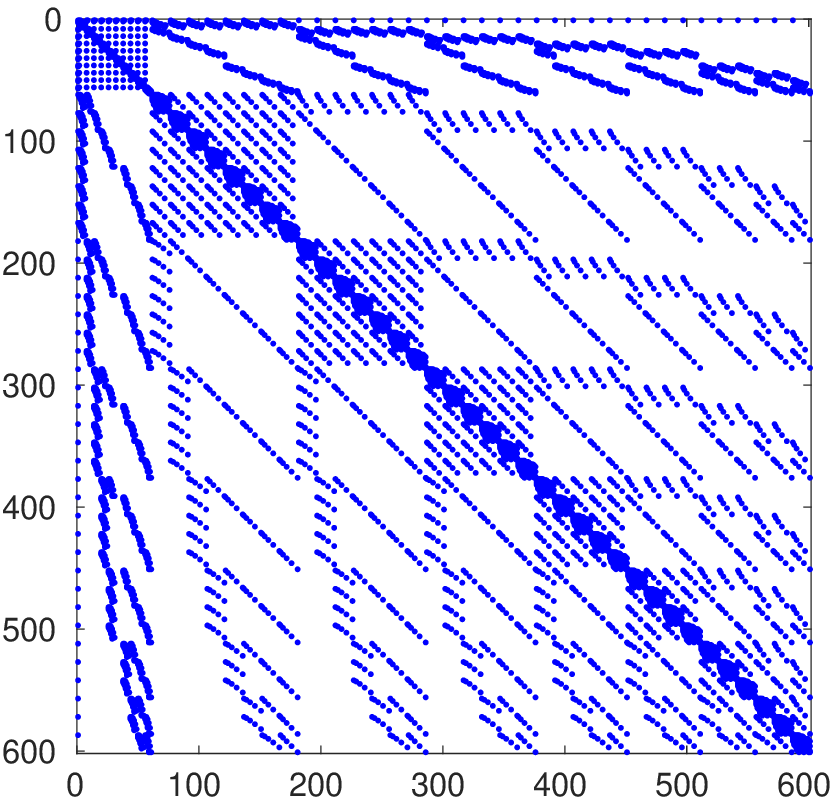}} \qquad\quad
		\subfloat[$\tol = 10^{-3}$.]{\includegraphics[height=0.27\linewidth]{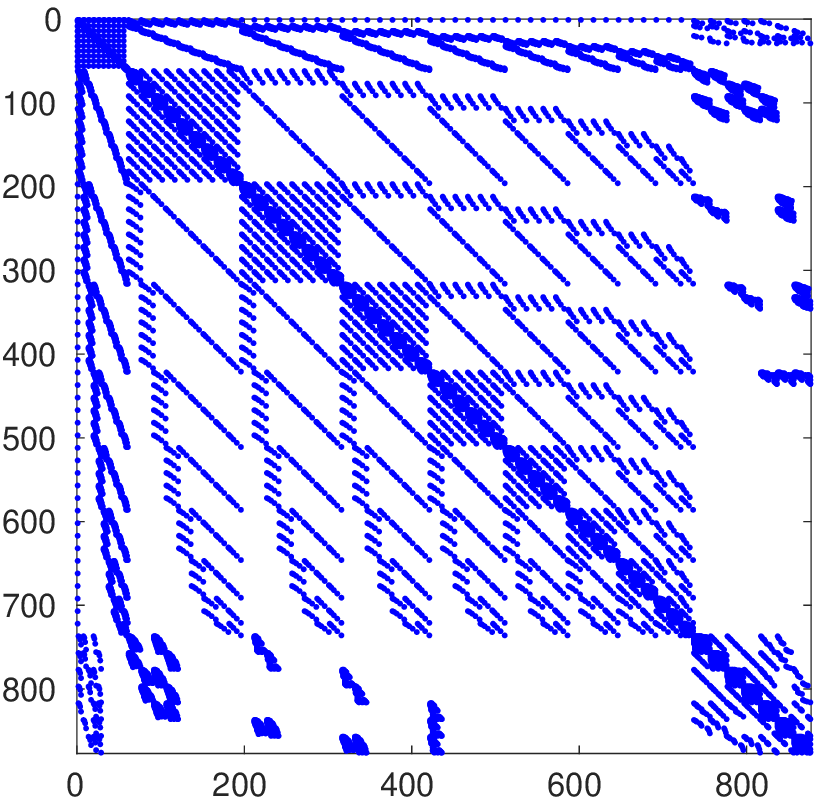}} \qquad\quad
		\subfloat[$\tol = 10^{-4}$.]{\includegraphics[height=0.27\linewidth]{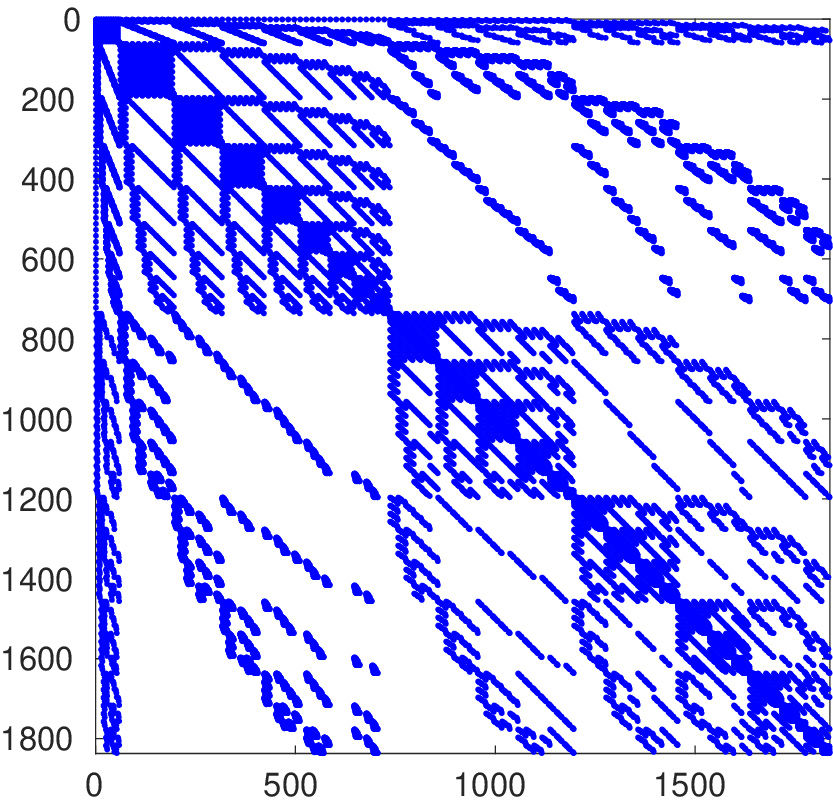}}
		\caption{Matrix block-structure (each block has dimension $\nphy\times\nphy$) for test problem 2 with $N=10$.}\label{fig:helm2b}
	\end{center}
\end{figure}

Fig.~\ref{fig:helm2b} illustrates the block structure of the coefficient matrix of the resulting linear system. Each point in the figure represents a block of dimension $\nphy\times\nphy$. Moreover, each nonzero block of the coefficient matrix has the same sparsity pattern as the corresponding deterministic problem. It can be observed that the coefficient matrix of the Helmholtz equation is much denser than that of the diffusion equation, which makes it more time consuming to solve than the diffusion problem.

\begin{figure}[!htbp]
	\begin{center}		
		\subfloat[Mean errors w.r.t. CPU time.]{\includegraphics[width=0.46\linewidth]{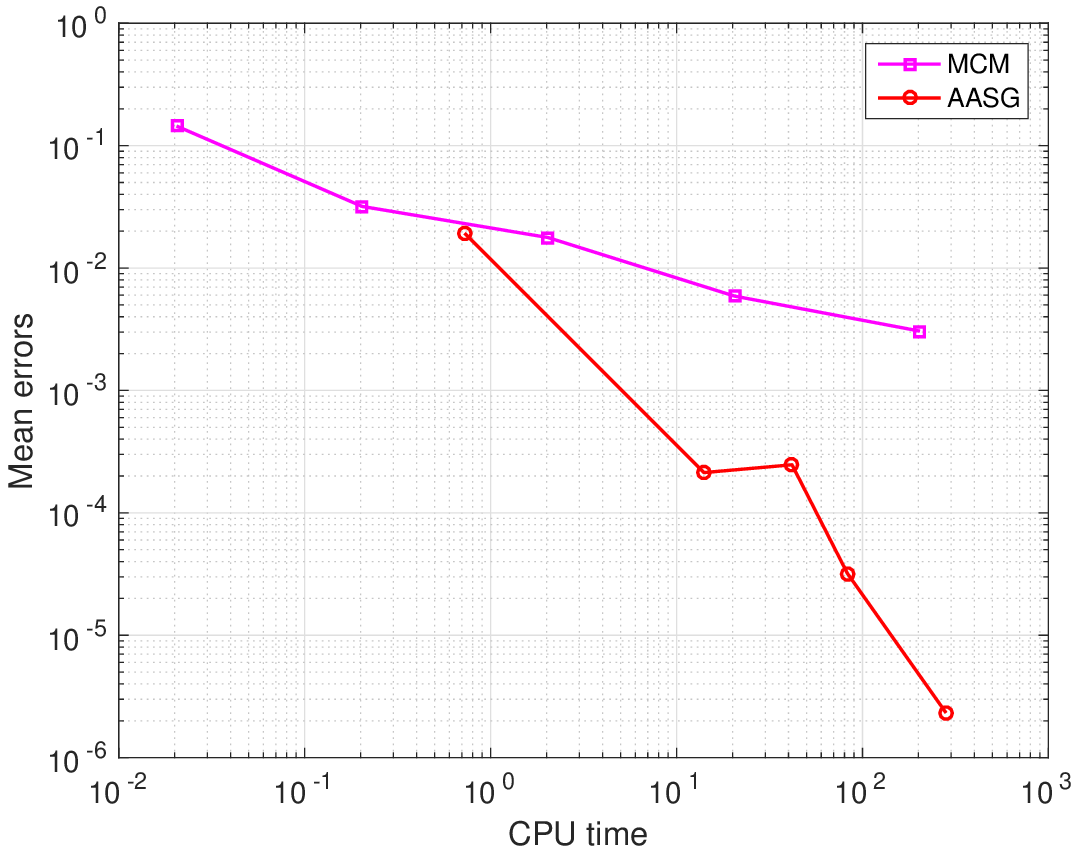}}\qquad
		\subfloat[Variance errors w.r.t. CPU time.]{\includegraphics[width=0.46\linewidth]{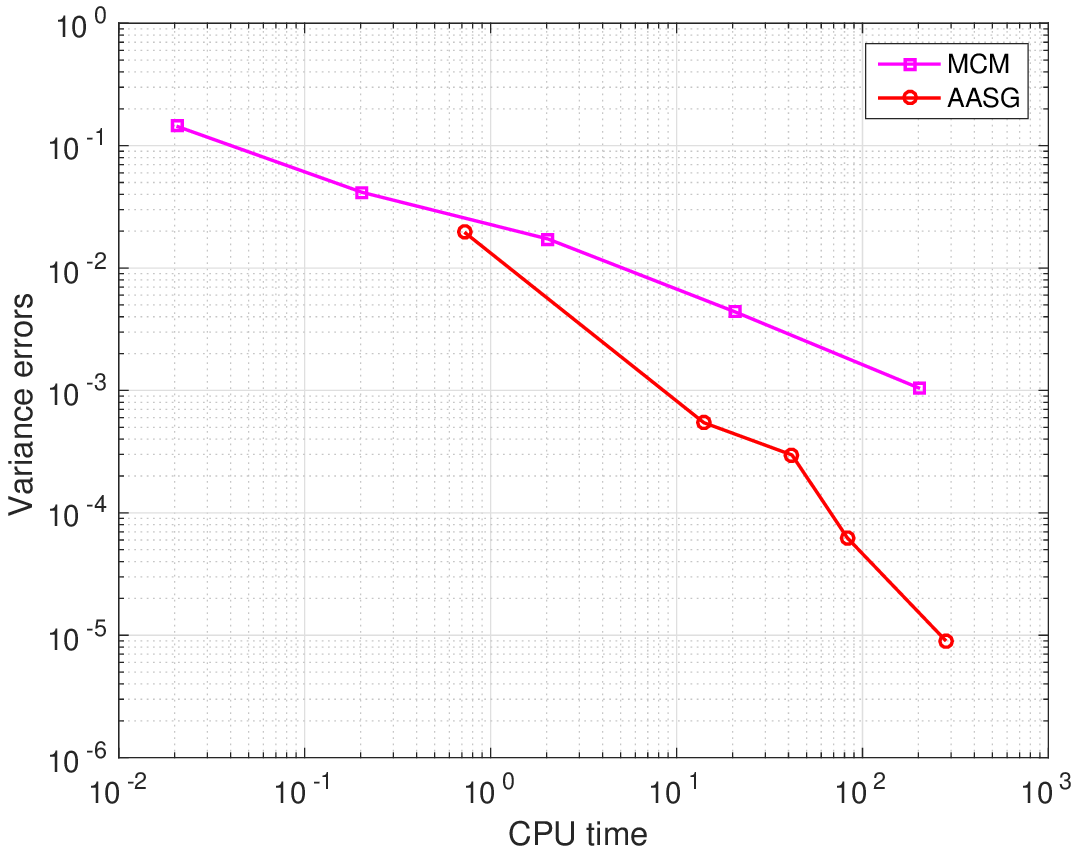}}\\
		\subfloat[Mean errors w.r.t. DOF.]{\includegraphics[width=0.46\linewidth]{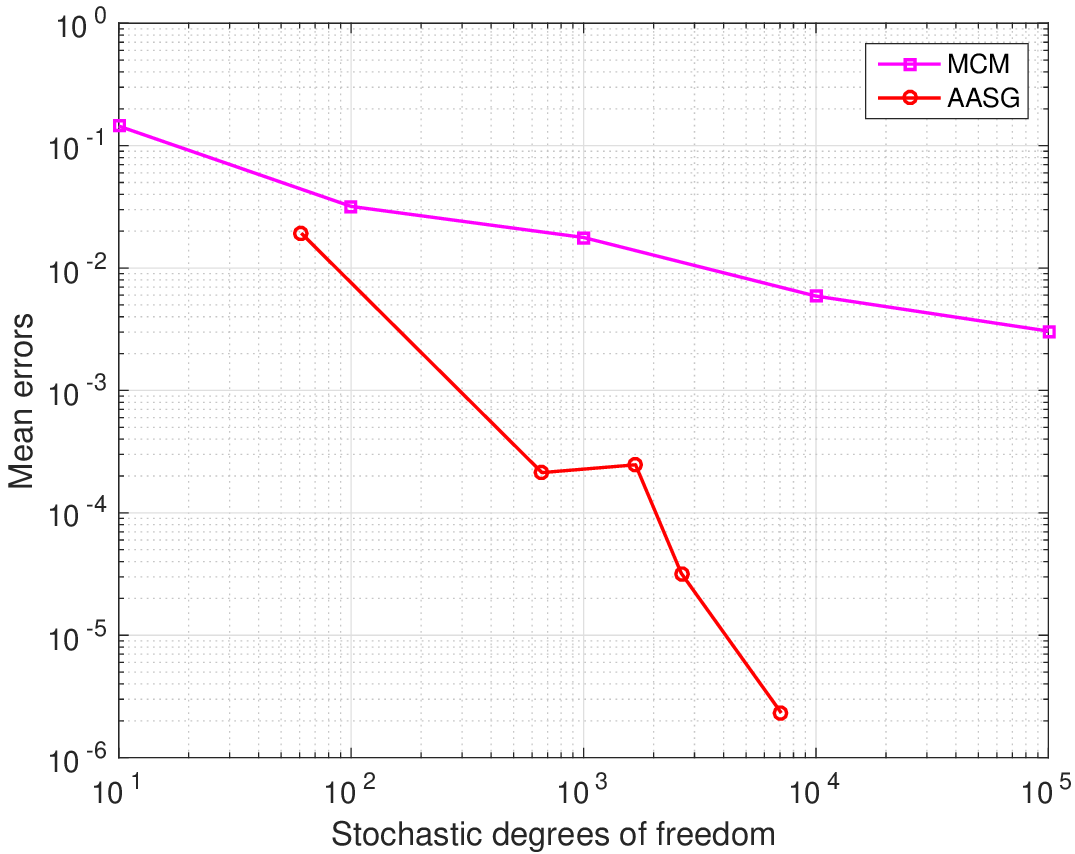}}\qquad
		\subfloat[Variance errors w.r.t. DOF.]{\includegraphics[width=0.46\linewidth]{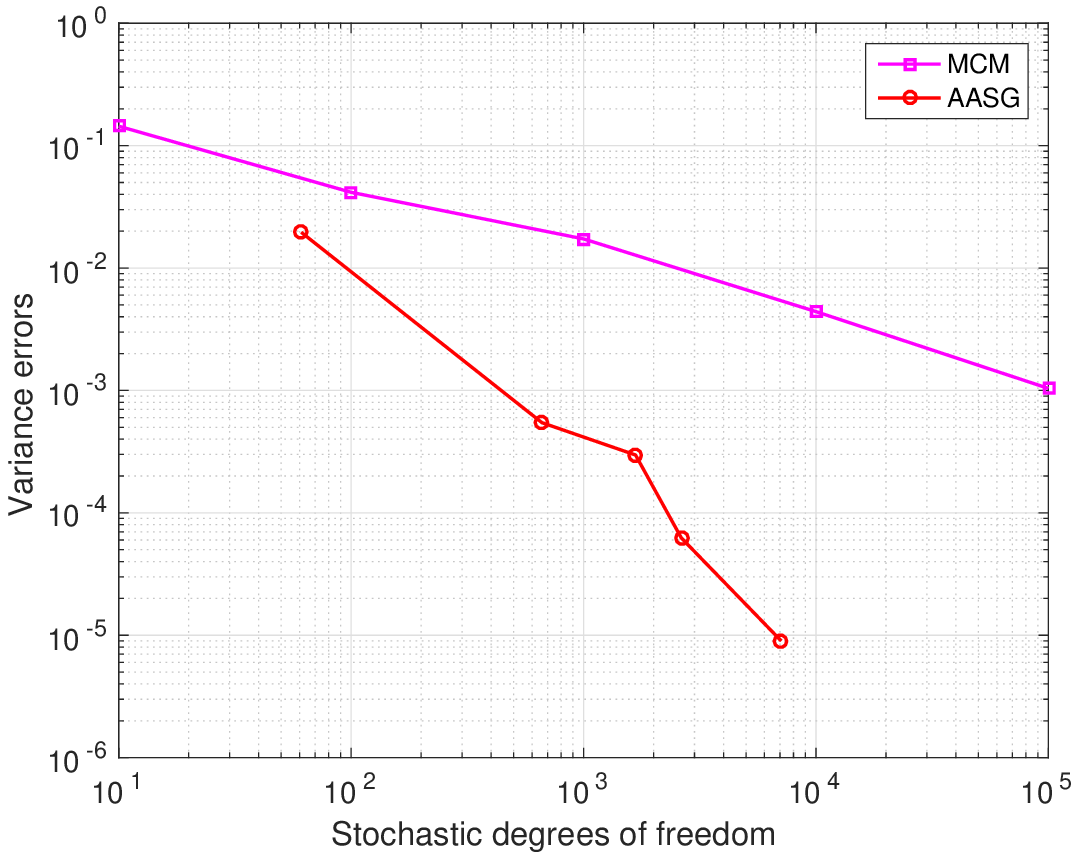}}
	\end{center}
	\caption{\guanjie{Comparison of errors with respect to CPU times and stochastic degrees of freedom for test problem 2 with $N=10$, where the total degree of gPC in the AASG method is set to $6$.}}\label{fig:helm2c}
\end{figure}

\guanjie{Fig. \ref{fig:helm2c} investigates the accuracy achieved by the two methods, presenting errors concerning both CPU time and stochastic degrees of freedom. The results indicate the notable efficiency of the AASG method in comparison to the MCM, as it is dozens of times faster in terms of CPU time and hundreds of times faster in terms of stochastic degrees of freedom. This distinction from the diffusion problem is noteworthy, as the performance of CPU time does not seem to align with that of stochastic degrees of freedom.  This phenomenon can be attributed to the fact that the number of matrix-vector products computed in each iteration for solving the linear system~\eqref{eq:linsg} is proportional to $(N+1)^2$ for Helmholtz problems, in contrast to $N+1$ for diffusion problems.}

\section{Conclusion}\label{sec:concusion}
In this work, we investigate the generalized polynomial chaos (gPC) expansion of component functions for the ANOVA decomposition, and present a concise form of the gPC expansion for each component function. With this formulation, we propose an adaptive ANOVA stochastic Galerkin method for solving partial differential equations with random inputs. The proposed method effectively selects basis functions in the stochastic space, enabling significant reduction in the dimension of the stochastic approximation space. \guanjie{Compared with anchored ANOVA methods, the proposed approach avoids the difficulty for selecting proper anchor points, which are crucial for achieving efficient approximations in the context of anchored ANOVA methods.} Numerical simulations are conducted to demonstrate the effectiveness and efficiency of the proposed method. While our current focus is on selecting the basis in the stochastic space, future work will explore techniques for reducing computational costs in the physical space.
\newline

\bibliographystyle{spmpsci}       
\bibliography{AASG_V2}                

\end{document}